\definecolor{red}{rgb}{1.0,0.0,0.0}
\definecolor{blu}{rgb}{0.0,0.0,1.0}
\def\blu#1{{\textcolor{blu}{#1}}}
\definecolor{gre}{rgb}{0.03,0.50,0.03}
\definecolor{darkviolet}{rgb}{0.58, 0.0, 0.83}
\definecolor{airforceblue}{rgb}{0.36, 0.54, 0.66}
\definecolor{greencs}{rgb}{0.0, 0.62, 0.42}
\definecolor{bleu}{rgb}{0.19, 0.55, 0.91}
\definecolor{mossgreen}{rgb}{0.68, 0.87, 0.68}
\definecolor{lb}{rgb}{0.45, 0.76, 0.98}
\definecolor{lr}{rgb}{1.0, 0.41, 0.38}
\providecommand{\U}[1]{\protect\rule{.1in}{.1in}}
\newtheorem*{rep@theorem}{\rep@title}
\newcommand{\newreptheorem}[2]{%
\newenvironment{rep#1}[1]{%
 \def\rep@title{#2 \ref{##1}}%
 \begin{rep@theorem}}%
 {\end{rep@theorem}}}
\newcommand{\myitem}[1]{
\item[#1]\protected@edef\@currentlabel{#1}
}
\newtheorem{theorem}{Theorem}[section]
\newtheorem*{theorem*}{Theorem}
\newtheorem{corollary}[theorem]{Corollary}
\newtheorem{definition}[theorem]{Definition}
\newtheorem*{definition*}{Definition}
\newtheorem{lemma}[theorem]{Lemma}
\newtheorem{proposition}[theorem]{Proposition}
\theoremstyle{definition}
\newtheorem{remark}[theorem]{Remark}
\newtheorem{assumption}[theorem]{Assumption}
\newcommand{\bE}{\mathds{E}}
\newcommand{\bR}{\mathds{R}}
\newcommand{\rL}{\mathscr{L}}
\newcommand{\rO}{\mathscr{O}}
\newcommand{\rT}{\mathscr{T}}
\newcommand{\sL}{\mathcal{L}}
\newcommand{\bfb}{{\ensuremath{\mathbf b}} }
\newcommand{\bfx}{{\ensuremath{\mathbf x}} }
\newcommand{\bfy}{{\ensuremath{\mathbf y}} }
\newcommand{\bfX}{{\ensuremath{\mathbf X}} }
\newcommand{\bfY}{{\ensuremath{\mathbf Y}} }
\newcommand{\dd}{{\ensuremath{\mathrm d}} }
\newcommand{\de}{{\ensuremath{\mathrm e}} }
\newcommand{\dC}{{\ensuremath{\mathrm C}} }
\newcommand{\dD}{{\ensuremath{\mathrm D}} }
\newcommand{\ud}{\,\mathrm{d}}
\newcounter{sqindex}
\renewcommand{\epsilon}{\varepsilon}
\renewcommand{\phi}{\varphi}
\DeclarePairedDelimiter{\abs}{\lvert}{\rvert}
\DeclarePairedDelimiter{\norm}{\lVert}{\rVert}
\DeclarePairedDelimiterX{\inprod}[2]{\langle}{\rangle}{#1, #2}
\newcommand{\R}{\bR}
\newcommand{\mail}[1]{\href{mailto:#1}{\normalfont\texttt{#1}}}
\def\@setthanks{\vspace{-\baselineskip}\def\thanks##1{\@par##1\@addpunct.}\thankses}
\title[COVID-19 lockdown: a DP approach]{A simple planning problem for COVID-19 lockdown: a dynamic programming approach}
\author[A.~Calvia ]{Alessandro Calvia\textsuperscript{\MakeLowercase{a},1}}
\thanks{\noindent \textsuperscript{a} Dipartimento di Economia e Finanza, LUISS University, Viale Romania 32, 00197, Rome, Italy.}
\author[F.~Gozzi ]{Fausto Gozzi\textsuperscript{\MakeLowercase{a},2}}
\author[F.~Lippi ]{Francesco Lippi\textsuperscript{\MakeLowercase{a, b},3}}
\thanks{\noindent \textsuperscript{b} Einaudi Institute for Economics and Finance, Via Sallustiana 62, 00187, Rome, Italy.}
\author[G.~Zanco ]{Giovanni Zanco\textsuperscript{\MakeLowercase{a},4}}
\thanks{\noindent \textsuperscript{1} E-mail: \mail{acalvia@luiss.it}.
\\
\noindent \textsuperscript{2} E-mail: \mail{fgozzi@luiss.it}
\\
\noindent \textsuperscript{3} E-mail: \mail{flippi@luiss.it}
\\
\noindent \textsuperscript{4} E-mail: \mail{gzanco@luiss.it}}
\begin{document}

\begin{abstract}
{\color{blue}
A large number of recent studies consider a compartmental SIR model to study  optimal control policies aimed at   containing the diffusion of COVID-19  while minimizing the economic costs of preventive measures.    
 Such  problems are   non-convex and  standard results   need not to hold. 
We use a Dynamic Programming approach and  prove some continuity properties of the value function of the associated optimization problem. 
We study the corresponding Hamilton-Jacobi-Bellman equation and  show that the value function solves it in the viscosity sense.
 Finally, we discuss  some optimality conditions. 
 Our paper represents a first contribution towards a complete analysis of non-convex dynamic optimization problems, within a Dynamic Programming approach.
}

\end{abstract}

\maketitle

\noindent \emph{Key words}: Controlled SIRD model; Optimal lockdown policies; Optimal control with state space constraints; Optimality conditions; Viscosity solutions.

\bigskip

\noindent JEL: C61, E23, I12, I15, I18.

\bigskip

\noindent AMS 2020: 49K15, 49L20, 49L25.

\bigskip

\tableofcontents

\allowdisplaybreaks

\section{Introduction}
Following the COVID-19 outbreak a large number of  papers have been written combining elements of both epidemiology and economics. One important motivation for these papers is that the pandemic faced policymakers with the challenge of keeping the virus diffusion under control while avoiding to suffocate economic activity (see, e.g., \citet{acemoglu2021,AAL21,Atk20,ERT21,federicoferrari2021}).
{\color{blue}From a mathematical perspective, this motivation leads to the formulation of suitable dynamic optimization problems, that can be tackled with different techniques. A common issue of these problems is that,} even in simple {\color{blue}settings}, without considering heterogeneity of viral transmission or uncertainty on the infection mortality, they are {\color{blue}mathematically} involved because they may not be convex.

\textcolor{blue}{In the typical setup of these problems, the state dynamics  are given by the so-called compartmental models, where  the state variables are the epidemic compartments, such as the Susceptibles, the Infected, and the Recovered in the SIR  model. 
A peculiar feature of the state equations is an interaction term between some of these classes, usually the product between the number of susceptibles and infected. The state dynamics provide the constraint for an optimization problem and the interaction term makes the Hamiltonian function associated to the control problem non-convex. In this situation, the classical sufficiency results for the Pontryagin Maximum Principle, like the Arrow or Mangasarian conditions, cannot be applied, as they require convexity of the Hamiltonian function.
Likewise, the lack of convexity hinders the application of numerical methods, which are often employed in the analysis of these dynamic optimization problems.
}

\blu{This paper studies a specific family of non-convex problems, to show that the Dynamic Programming approach can be profitably applied to analyze them.
In particular, Dynamic Programming allows us to characterize the value function of the optimization problem as the unique viscosity solution
of a suitable Hamilton-Jacobi-Bellman (HJB) equation. It also identifies 
optimality conditions which are sufficient for global optima independently of any convexity assumption (see, e.g.,~\citet{bardi1997:optctrl}).
}

  \textcolor{blue}{
We illustrate our approach with an application to  the  simple model of~\citet{AAL21}, one of the recent  epi-econ  papers  featuring the type of non-convexities in the epidemic propagation terms discussed above.
The main results of the paper establish:
the continuity, and in some cases the Lipschitz continuity, of the value function of the optimization problem; the fact that the
value function is the unique viscosity solution of the HJB equation associated to the control problem
and that it is also a bilateral viscosity solution; a sufficient optimality condition in terms of the semidifferentials
of the value function. More details are given in Subsections~\ref{sec:keycontrib} and~\ref{sec:mainres} below.
It is important to highlight that our results can be proved (with straightforward changes) for other models with similar or more general features, such as those presented in~\citet{acemoglu2021,PigShi22,ERT21,FJS21}.
}


\blu{We emphasize that, due to the difficulty of the problem, some issues remain  open.
In particular, we cannot prove that the value function is differentiable everywhere.
This also means that we cannot rule out a singular behaviour (e.g., discontinuities) of optimal strategies in some regions of the state space.
This fact may represent an issue, for instance, in numerical simulations for the type of optimization problems that we consider.
To the best of our knowledge, there are no results in the mathematical literature that provide a numerical scheme suited to them; in addition, the extension of known numerical schemes for viscosity solutions to the kind of first-order Hamilton-Jacobi-Bellman equations that we treat here does not seem straightforward, due again to the non-convexity of the state dynamics and of the Hamiltonian.}

{\color{blu}
\subsection{Technical issues and selected related contributions}\label{sec:keycontrib}
Optimal control problems that are non-convex either in the objective functional
or in the state dynamics (or both), are notoriously difficult to study with a
Maximum Principle approach. Indeed, the standard sufficiency conditions,
like the Arrow or the Mangasarian conditions, do not hold.
Nonetheless,~\citet{goenkaetal2021sir, goenkaetal2022modelling} analyzed epi-econ SIS and SIR models with a Maximum Principle approach, by providing sufficient conditions for local extrema under weaker assumptions than those of the Arrow or the Mangasarian conditions. The Maximum Principle approach was applied also in~\citet{goenkaetal2014} to a SIS model complemented with economic variables, by proving existence of optimal strategies.
The results proved in all these papers rely on the structure of the problems, in particular on the convexity
of the objective function and, for sufficient conditions, on some ideas given first in~\citet{leitmannstalford1971}.
Unfortunately, these results cannot be directly applied to our case, since our objective function may be non-convex.

As we previously anticipated, the Dynamic Programming approach (if applicable) presents clear advantages in treating non-convex problems.
One of these advantages is the possibility of characterising the value function of the optimization problem as the unique viscosity solution\footnote{The concept of viscosity solutions has been introduced by Crandall and Lions in~\citep{CrandallLions1983:viscsol} (see, e.g.,~\citet{crandalletal1992:users} for a synthesis of viscosity solution theory) to cope with the fact that, in many optimal control problems, the value functions are not differentiable everywhere and the associated  HJB equations may not have classical (i.e., differentiable) solutions, even in simple cases.
Using such a more general solution concept it is possible to prove existence and uniqueness of solutions which are simply continuous (or even discontinuous) and to apply suitable algorithms to compute numerically the value function.}
of the HJB equation associated to the optimal control problem. It is important to stress that proving such a characterization can motivate the study of numerical algorithms suited to the class of optimization problems that we study here. These numerical schemes could be used to approximate the value function\footnote{We refer the reader to~\citet[Appendix A]{bardi1997:optctrl} for an introduction to numerical schemes for viscosity solutions to HJB equations.}.
In our setting, the main issue that we face to prove this property of the value function is the presence of positivity state constraints, which may be a hindrance to show uniqueness, see, e.g.,~\citet{soravia1999a, soravia1999b}. This problem is solved because the so-called interior cone condition holds. This condition was introduced first in~\citet{soner:optcontrol1} and it allows us to prove uniqueness of the solution to the HJB equation in the viscosity sense (see Theorem~\ref{th:Vconstrviscsol} below).

Another advantage of the Dynamic Programming approach is the possibility of identifying optimality conditions which are sufficient for global optima
independently of any convexity assumptions (more details on this in, e.g.,~\citet[Chapter III, Section 2.5]{bardi1997:optctrl} and~\citet[Section 2.5]{fabbri:soc}). These conditions are usually obtained through the so-called Verification Theorems and the main issue is to show that the value function is continuously differentiable in the interior of the state space.
This is rather problematic in our setting because, in general, value functions are not continuously differentiable, i.e., they admit singularities
(see, e.g.,~\citet[Section II.2]{flemingsoner2006}).
There are quite general conditions that imply continuous differentiability of the value function, namely, its semiconcavity and strict convexity in the costate variables of the Hamiltonian function (see~\citet{cannarsasoner1989}, \citet[Section 5.3]{cannarsasinestrari2004} and also~\citet[Chapter II, Section 5]{bardi1997:optctrl}).
Unfortunately in our case these conditions do not hold or are difficult to show (provided that they hold).
For this reason, based on the ideas of~\citet[Chapter III, Sections 2.3-2.4]{bardi1997:optctrl}, we establish a Backward Dynamic Programming Principle. This is key to prove that the value function is a bilateral solution of the correspoding HJB equation (see Theorem~\ref{th:V_viscsol} below) and to state a weak form of sufficient optimality condition (see Theorem~\ref{thm:optimal}). 

We mention that similar techniques were used successfully in other economic applications,
see, e.g.,~\citet{bambigozzi2020, frenigozzisalvadori2006, frenigozzipignotti2008:optstrat}. However, in those problems homogeneity and semiconcavity allowed the authors to apply the method of~\citet{cannarsasoner1989}, which cannot be used in our setting.
}

\subsection{Overview of the main results}\label{sec:mainres}
{\color{blue}
From a technical perspective, we can single out three main contributions of our paper.
\begin{itemize}
  \item 
First, we prove that the value function is uniformly continuous and, for a sufficiently large discount rate, Lipschitz continuous in its domain (see Proposition~\ref{prop:V_UCb}).
  \item 
Second, we establish dynamic programming principles
for our problem (the standard one and the backward one, see Propositions~\ref{prop:dpp} and~\ref{prop:BDP}, respectively). This allows us to characterize the value function as the unique viscosity solution to the associated HJB equation~\eqref{eq:HJB}, satisfying a suitable boundary condition (i.e., being a supersolution at the boundary),
and to prove that it is also a bilateral solution (Theorem~\ref{th:V_viscsol}).
  \item 
Third, we show an optimality condition (see Theorem~\ref{thm:optimal}), that allows us to characterize the optimal strategies. In particular, we show that (except from trivial cases) the optimal strategy is a \textit{laissez-faire} policy as long as the ratio between the rate of newly infected people and the population that can be put in lockdown is not higher than a threshold, which depends on the difference between the marginal cost of infected and the marginal cost of susceptibles. As this ratio increases, the lockdown is enforced up to a full lockdown, when a second threshold is passed.
\end{itemize}
}

\bigskip
The paper is organized as follows.
In Section~\ref{sec:model} we introduce the optimal control problem for the SIRD model that we aim to analyse {\color{blue} and we provide some preliminary results.}
{\color{blue}In Section~\ref{sec:dp} we provide continuity properties of the value function (Subsection~\ref{sec:Vprop}) and we establish the dynamic programming principles (Subsection~\ref{sec:dpprinc}).
In Section~\ref{sec:HJB}} we study the HJB equation and, in particular, we provide the explicit expression of the Hamiltonian function; in Subsection~\ref{sec:Vvisc} we prove that the value function is a viscosity solution, in a suitable sense, of the HJB equation; Subsection~\ref{sec:opt} contains some optimality conditions{\color{blue}, with which we are able to provide an economic intepretation of optimal policies.}
In Section~\ref{sec:concl} we draw some conclusions on our results and present some ideas for future work on the subject.

\section{The optimal control problem}
\label{sec:model}
In this section we introduce the optimization problem for the SIRD model that we study.
{\color{blue}We denote by $S$, $I$, $R$, $D$, the classes of susceptible, infectious, recovered, and dead individuals, respectively}. We assume that there are no newborn and that people either die from the infection or live forever; this is clearly unrealistic{\color{blue}, but it} is compatible with the duration of the pandemic/endemic phase, {\color{blue}which is} shorter than the average life duration.

{\color{blue}The dynamics of the population introduced above are influenced by a planner, who may enforce lockdown by choosing its intensity, i.e., the percentage $L_t$ of the population that is forced to be locked down, at each time $t \geq 0$. This percentage can be chosen up to some fixed threshold $\bar L \leq 1$, that is, $L_t \in [0,\bar L]$, for each $t \geq 0$.
However,} the lockdown effectiveness is assumed to be less than {\color{blue}the planned one}, because people may fail to respect the lockdown measures and the virus can still circulate; the lockdown intensity is thus damped by a factor $\theta\in(0,1)${\color{blue}, i.e., $\theta L_t$ is the real fraction of population that is actually in lockdown.
Lockdown applies only to susceptible and infectious individuals, since} we assume that recovered ones cannot get infected again; this is possible because we assume that testing is available, and hence the planner knows who is infected and who has recovered.
{\color{blue}Finally,} we account for the possibility of a vaccine and a cure being discovered (for simplicity at the same time) {\color{blue}and we assume} that the epidemic dynamics are instantaneously stopped; such vaccine and cure appear at a random time $\tau$, which we assume to be exponentially distributed with intensity $\nu$.

The model we consider is specified as follows. For $0 \leq t < \tau$, the process $(S,I,R,D)$ evolves according to the following system of controlled ordinary differential equations
\begin{equation}
\label{eq:SIRD}
\left\{
\begin{aligned}
S^\prime_t&=-\beta(1-\theta L_t)S_t (1-\theta L_t )I_t, & S_0 &= s_0, \\
I^\prime_t &=\beta(1-\theta L_t )S_t (1-\theta L_t )I_t -\gamma I_t -I_t \phi\left(I_t \right), & I_0 &= i_0,\\
R^\prime_t &=\gamma I_t, & R_0 &= r_0, \\
D^\prime_t &=I_t \phi\left(I_t \right), & D_0 &= d_0.\\
\end{aligned}
\right.
\end{equation}
{\color{blue}The lockdown intensity function} $t \mapsto L_t$ is chosen in the set of admissible control strategies
\begin{equation}\label{eq:admissible_ctrl}
  \rL \coloneqq \left\{L\colon[0,+\infty)\to [0,\bar L], \text{ Borel-measurable }\right\}.
\end{equation}
{\color{blue}The parameters appearing in~\eqref{eq:SIRD} have the following meaning:}
$\beta > 0$ is the number of susceptible agents per unit of time to whom an infected agent can transmit the virus, among those who are not in lockdown; $\gamma > 0$ is the fraction of infected agents that recovers; $\phi(i)$ is the rate per unit of time of infected agents $i$ that die.

At time $\tau$, processes $S$ and $I$ jump to $0$, process $R$ jumps to $R_{\tau^-} + S_{\tau^-} + I_{\tau^-}$, while process $D$ remains at the same level immediately prior to $\tau$, i.e., $D_{\tau} = D_{\tau^-}$. More precisely, we have that, for $t \geq \tau$,
\begin{align}\label{eq:SIRDaftertau}
&S_t = I_t = 0, & &R_t = R_{\tau^-} + S_{\tau^-} + I_{\tau^-}, & &D_t = D_{\tau^-}.
\end{align}
{\color{blue}We postulate that there are no more susceptible or infected people after a vaccine and a cure arrive at time $\tau$}; all susceptible and infected immediately prior to $\tau$ {\color{blue}recover istantaneously and} there are no new deaths.

{\color{blue}It is worth emphasizing that the reason why we model the arrival of a vaccine and a cure as above is justified by the tractability of the optimization problem that we introduce below. Indeed, in this problem the planner controls the epidemic dynamics up to the (random) time $\tau$ at which a vaccine and a cure are discovered. In other words, the planner disregards what happens to the epidemic dynamics after time $\tau$.}

\begin{remark}
The case {\color{blue}where the highest possible intensity of lockdown is equal to $1$, i.e., $\bar L = 1$, corresponds to allowing the possibility of a full lockdown}. This is not realistic, as basic activities related for example to energy production and distribution of fundamental goods must remain functional. {\color{blue}Nonetheless, we will not introduce the restriction $\bar L<1$, since it} has no particular effect on the mathematical results presented below.
\end{remark}

{\color{blue}
The following assumption ensures existence and uniqueness of a solution to (\ref{eq:SIRD}), for any given {$L \in \rL$}. This can be easily shown with standard methods {(see, e.g., \citep[Chapter~III, Section~5]{bardi1997:optctrl})}.

\begin{assumption}\label{as:phi}
The function $\phi$, appearing in~\eqref{eq:SIRD}, is positive, bounded, and Lipschitz continuous. More specifically,
\begin{equation}\label{eq:phibounded}
0 < \phi(i) \leq \gamma, \quad \text{for all } i \in [0,1],
\end{equation}
where $\gamma$ is the fraction of infected agents that recovers,
and there exists a constant $M_\phi > 0$, such that, for all $i, i' \in [0,1]$,
\begin{equation}\label{eq:philip}
\abs{\phi(i) - \phi(i')} \leq M_\phi \abs{i - i'}.
\end{equation}
\end{assumption}

\begin{remark}\label{rem:phi}
In our model, the mortality rate is not constant, but provided by the function $\phi$, and depends on the number of infected people. Such a choice is motivated by some studies (see, e.g., \citep{ciminelli2020healthcare,favero2020covid}). Various papers in the literature deal, instead, with a constant mortality rate (lower than $\gamma$), which is a case covered by our model. However, specializing our results to the case of a constant mortality rate would not allow us to obtain deeper or more refined statements. Indeed, as highlighted in the Introduction, the technical difficulties lie in the fact that the epidemic dynamics given in~\eqref{eq:SIRD} feature an interaction term between the number of susceptibles and infected.
\end{remark}
}

The planner maximizes the following quantity over all admissible control strategies
\begin{equation}
  \label{eq:J_nu_prelim}
  \bE\left[\int_0^{+\infty} \de^{-rt}\left[\left(N_t -D_t -\left(S_t +I_t \right)L_t \right)w-\chi I_t \phi\left(I_t \right)\right]\ud t\right] \, ,
\end{equation}
where $r > 0$ is a fixed discount factor, $w > 0$ is the output produced by each agent alive that is not in lockdown, $\chi > 0$ is an extra cost, in units of output, for each agent that dies as a consequence of the infection.
{\color{blue}Hence, the planner aims at maximizing the present value of the total production output, considering the cost of fatalities.}

\subsection{{\color{blue}Preliminary results}}\label{sec:prelim}
{\color{blue}Let us fix, for the time being, an arbitrary admissible control $L \in \rL$. Let us denote by $N_t$ the total population (including deaths) at time $t \geq 0$, i.e., $N_t \coloneqq S_t +I_t +R_t +D_t$.}
For simplicity, we normalize the initial population so that $N_0=s_0+i_0+r_0+d_0=1$.

{\color{blue}From~\eqref{eq:SIRD}}, we have $N^\prime_t =0$, thus $N_t = 1$, for all $t \geq 0$.
Therefore, for every time $t \geq 0$ and any initial condition $(s_0,i_0,r_0,d_0)$, with $s_0+i_0+r_0+d_0=1$,
the state of the system $(S_t,I_t,R_t,D_t)$ belongs to the simplex\footnote{Said otherwise, the flow associated to~\eqref{eq:SIRD} leaves invariant the set $\Delta$.}
\begin{equation*}
  \Delta \coloneqq \left\{(s,i,r,d)\colon s,i,r,d\geq 0,\ s+i+r+d=1\right\}.
\end{equation*}
{\color{blue}This fact is consistent with the assumption that there are no newborn and that people either die from the infection or live forever.
Moreover, to determine uniquely the solution to~\eqref{eq:SIRD} it is enough to provide the triplet $(s_0, i_0, r_0)$ as initial condition, with $s_0, i_0, r_0 \geq 0$ and $s_0 + i_0 + r_0 \leq 1$, and set $d_0 = 1 - s_0 - i_0 - r_0$. From now on, we will specify only such a triplet, unless stated otherwise.}

Since $t\mapsto D_t $ is clearly nondecreasing, we have that the number of people alive at time $t \geq 0$, i.e., $N_t - D_t = S_t + I_t + R_t$, is {\color{blue}non}decreasing over time{\color{blue}, that is,
\begin{equation*}
S_t+I_t+R_t\leq s_0+i_0+r_0 \leq 1, \quad t \geq 0.
\end{equation*}
Therefore, for all $t \geq 0$ and any initial condition $(s_0, i_0, r_0)$ as above,%
}
the state of the system $(S_t,I_t,R_t,D_t)$ belongs to the set
\begin{equation*}
\Delta(s_0,i_0,r_0) \coloneqq \{(s,i,r,d) \in \Delta \colon s + i +r \leq s_0 + i_0 + r_0\}.
\end{equation*}

\begin{remark}\label{zeroinit}
It is immediate to check that if $i_0 =0$, then $(S_t,R_t,D_t)=(s_0,r_0,d_0)$, for every $t\geq 0$, i.e., the dynamics is constant and not affected by the choice of the control strategy. If $s_0=0$ the dynamics is not constant but, as before, is not affected by the choice of the control strategy.
\end{remark}

{\color{blue}Recall that the planner maximizes over all admissible control strategies $L \in \rL$ the functional
\begin{equation}
  \label{eq:J_nu}
 \tilde J(L, s_0, i_0, r_0) \coloneqq
  \bE\left[\int_0^{+\infty} \de^{-rt}\left[\left(N_t -D_t -\left(S_t +I_t \right)L_t \right)w-\chi I_t \phi\left(I_t \right)\right]\ud t\right] \, ,
\end{equation}
which depends on any given initial condition $s_0$, $i_0$, $r_0 \geq 0$, with $s_0 + i_0 + r_0 \leq 1$, for~\eqref{eq:SIRD}.%
}
Using the dynamics of processes $(S,I,R,D)$ and the fact that $\tau$ is an exponential random variable with intensity $\nu$, we can rewrite the functional $\tilde J$ as follows.
{\color{blue}
\begin{lemma}\label{lem:Jdeterministic}
For all $s_0$, $i_0$, $r_0 \geq 0$, with $s_0 + i_0 + r_0 \leq 1$, and all $L \in \rL$, it holds
\begin{equation}
  \label{eq:Jold_Jnew_equiv}
  \tilde J(L,s_0,i_0,r_0)=\dfrac wr - \int_0^{+\infty} \de^{-(r+\nu)t}\left(\left(S_t +I_t \right)L_t w+\left(\frac{w}{r}+\chi\right) I_t \phi\left(I_t \right)\right)\ud t \, .
\end{equation}
\end{lemma}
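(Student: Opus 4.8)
The plan is to integrate out the random horizon $\tau$ and thereby reduce the expectation in \eqref{eq:J_nu} to a purely deterministic integral discounted at the effective rate $r+\nu$. Throughout, let $(S_t,I_t,R_t,D_t)$ denote the deterministic solution of \eqref{eq:SIRD} for the fixed control $L$ and datum $(s_0,i_0,r_0)$; by construction the state process agrees with this trajectory on $\{t<\tau\}$, while on $\{t\geq\tau\}$ it is given by \eqref{eq:SIRDaftertau}. First I would fix a trajectory and split the time integral at $\tau$. Since $N_t\equiv 1$, on $[0,\tau)$ the integrand equals $g_t\coloneqq\left(1-D_t-(S_t+I_t)L_t\right)w-\chi I_t\phi(I_t)$, whereas on $[\tau,+\infty)$ one has $S_t=I_t=0$ and $D_t=D_\tau$, so the integrand collapses to the constant $(1-D_\tau)w$. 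Hence, pathwise,
\begin{equation*}
\int_0^{+\infty}\de^{-rt}\left[\left(N_t-D_t-(S_t+I_t)L_t\right)w-\chi I_t\phi(I_t)\right]\ud t=\int_0^{\tau}\de^{-rt}g_t\ud t+\frac wr\de^{-r\tau}(1-D_\tau).
\end{equation*}

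Next I would take expectations, using that $\tau\sim\mathrm{Exp}(\nu)$, so that $\bP(\tau>t)=\de^{-\nu t}$ and $\tau$ admits density $\sigma\mapsto\nu\de^{-\nu\sigma}$. All state variables lie in $[0,1]$, $\phi$ is bounded by $\gamma$ (Assumption~\ref{as:phi}), and $L$ is valued in $[0,\bar L]$, so the integrand is bounded and the exponential weights are integrable; Tonelli's and Fubini's theorems therefore apply at every step. The first term becomes
\begin{equation*}
\bE\left[\int_0^{\tau}\de^{-rt}g_t\ud t\right]=\int_0^{+\infty}\de^{-rt}g_t\,\bP(\tau>t)\ud t=\int_0^{+\infty}\de^{-(r+\nu)t}g_t\ud t,
\end{equation*}
while the salvage term gives $\bE\bigl[\tfrac wr\de^{-r\tau}(1-D_\tau)\bigr]=\tfrac{\nu w}{r}\int_0^{+\infty}\de^{-(r+\nu)t}(1-D_t)\ud t$. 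Adding the two and collecting the contributions proportional to $(1-D_t)$, whose combined coefficient is $w(r+\nu)/r$, expresses $\tilde J$ as a single deterministic integral.

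The last step is to remove $D_t$. Its constant part yields $\tfrac{w(r+\nu)}{r}\int_0^{+\infty}\de^{-(r+\nu)t}\ud t=\tfrac wr$, the leading term in \eqref{eq:Jold_Jnew_equiv}. For the part containing $D_t$ I would integrate by parts, using $D_t'=I_t\phi(I_t)$ from \eqref{eq:SIRD}; the boundary term at $+\infty$ vanishes because $D_t$ is bounded, and
\begin{equation*}
\frac{w(r+\nu)}{r}\int_0^{+\infty}\de^{-(r+\nu)t}D_t\ud t=\frac wr D_0+\frac wr\int_0^{+\infty}\de^{-(r+\nu)t}I_t\phi(I_t)\ud t.
\end{equation*}
Substituting, the $\tfrac wr\,I_t\phi(I_t)$ term combines with the original $-\chi I_t\phi(I_t)$ into $-\bigl(\tfrac wr+\chi\bigr)I_t\phi(I_t)$, which together with the term $-(S_t+I_t)L_t w$ yields exactly \eqref{eq:Jold_Jnew_equiv}.

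I expect the main difficulty to be the bookkeeping around the random horizon: one must recognise that the survival probability $\de^{-\nu t}$ fuses with the discount factor into the effective rate $r+\nu$, and must correctly isolate the post-$\tau$ salvage contribution coming from the output of the surviving population. The only delicate point in the final computation is the boundary term $\tfrac wr D_0$ generated by the integration by parts: under the natural normalisation $D_0=d_0=0$ it disappears and \eqref{eq:Jold_Jnew_equiv} follows as stated, while for $d_0>0$ it contributes only the additive constant $-\tfrac wr d_0$, which is immaterial to the optimisation.
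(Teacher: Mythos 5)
Your proof is correct and follows essentially the same route as the paper's: split the time integral at $\tau$, use Fubini--Tonelli to fold the survival probability $\de^{-\nu t}$ of the exponential time into the effective discount rate $r+\nu$, and then eliminate $D_t$ by integration by parts using $D^\prime_t = I_t\phi(I_t)$. Your closing remark about the boundary term is in fact more careful than the paper's own computation: there the integration by parts produces $\frac wr (N_0 - D_0)$, which the paper silently writes as $\frac wr N_0 = \frac wr$, so the identity \eqref{eq:Jold_Jnew_equiv} is exact only when $d_0 = 0$ (equivalently $s_0+i_0+r_0=1$), and otherwise carries the additive constant $-\frac wr d_0$ you identified, which is indeed immaterial for the optimization over $L$.
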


\begin{proof}
We can explicitly compute the expectation in~\eqref{eq:J_nu} using the law of random time $\tau$ and the dynamics of $(S,I,R,D)$.
\begin{align*}
&\mathop{\phantom{=}}\tilde J(L,s_0,i_0,r_0)=\bE\left[\int_0^{+\infty} \de^{-rt}\left(\left(N_t -D_t -\left(S_t +I_t \right)L_t \right)w-\chi I_t \phi\left(I_t \right)\right)\ud t\right]
\\
&=\bE\left[\int_0^{\tau} \de^{-rt}\left(\left(N_t -D_t -\left(S_t +I_t \right)L_t \right)w-\chi I_t \phi\left(I_t \right)\right)\ud t \right.
\\
&\qquad \qquad + \left. \int_{\tau}^{+\infty} \de^{-rt}\left(\left(N_{\tau^-} -D_{\tau^-}\right)w\right)\ud t \right]
\\
&= \int_0^\infty \left\{\int_0^s \de^{-rt}\left(\left(N_t -D_t -\left(S_t +I_t \right)L_t \right)w-\chi I_t \phi\left(I_t \right)\right)\ud t \right\} \nu \de^{-\nu s} \, \dd s
\\
&\qquad \qquad + \int_0^{+\infty} \left\{\int_s^{+\infty} \de^{-rt}\left(\left(N_s -D_s\right)w\right)\ud t\right\} \nu \de^{-\nu s} \, \dd s
\end{align*}
Applying the Fubini-Tonelli's theorem we get that
\begin{align*}
&\mathop{\phantom{=}} \int_0^\infty \left\{\int_0^s \de^{-rt}\left(\left(N_t -D_t -\left(S_t +I_t \right)L_t \right)w-\chi I_t \phi\left(I_t \right)\right)\ud t \right\} \nu \de^{-\nu s} \, \dd s
\\
&\qquad \qquad + \int_0^{+\infty} \left\{\int_s^{+\infty} \de^{-rt}\left(\left(N_s -D_s\right)w\right)\ud t\right\} \nu \de^{-\nu s} \, \dd s
\\
&= \int_0^\infty \left\{\int_t^{+\infty} \nu \de^{-\nu s} \, \dd s\right\}  \de^{-rt}\left(\left(N_t -D_t -\left(S_t +I_t \right)L_t \right)w-\chi I_t \phi\left(I_t \right)\right)\ud t
\\
&\qquad \qquad +\int_0^\infty \left\{\int_s^{+\infty} \de^{-rt} \, \dd t \right\} \nu \de^{-\nu s} (N_s - D_s)w \, \dd s \\
&=\int_0^{+\infty} \de^{-(r+\nu)t}\left(\left(N_t -D_t\right) \dfrac{r+\nu}{r} w -\left(S_t +I_t \right) w L_t -\chi I_t \phi\left(I_t \right)\right)\ud t \, .
\end{align*}
Finally, noting that $N^\prime_t - D^\prime_t = S^\prime_t + I^\prime_t + R^\prime_t = I_t\phi(I_t)$ and integrating by parts, we get
\begin{align*}
&\mathop{\phantom{=}} \tilde J(L,s_0,i_0,r_0)=\int_0^{+\infty} \de^{-(r+\nu)t}\left(\left(N_t -D_t\right) \dfrac{r+\nu}{r} w -\left(S_t +I_t \right) w L_t -\chi I_t \phi\left(I_t \right)\right)\ud t
\\
&=\dfrac{r+\nu}{r} w \int_0^{+\infty} \de^{-(r+\nu)t}\left(N_t -D_t\right) \, \dd t - \int_0^{+\infty}\de^{-(r+\nu)t} \left(\left(S_t +I_t \right) w L_t +\chi I_t \phi\left(I_t \right)\right)\ud t
\\
&= \dfrac wr N_0 - \dfrac wr \int_0^{+\infty} \de^{-(r+\nu)t} I_t \phi(I_t) \, \dd t - \int_0^{+\infty}\de^{-(r+\nu)t} \left(\left(S_t +I_t \right) w L_t +\chi I_t \phi\left(I_t \right)\right)\ud t,
\end{align*}
whence, recalling that $N_0 = 1$, we obtain~\eqref{eq:Jold_Jnew_equiv}.
\end{proof}

\begin{remark}
The state variables appearing on the right-hand-side of~\eqref{eq:Jold_Jnew_equiv} are only $S$ and $I$, i.e., the number of susceptible and infectious individuals. If we consider the dynamics of this pair of variables%
, namely,
\begin{equation}
\label{eq:SIRDreduced}
\left\{
\begin{aligned}
S^\prime_t&=-\beta(1-\theta L_t)S_t (1-\theta L_t )I_t, & S_0 &= s_0\\
I^\prime_t &=\beta(1-\theta L_t )S_t (1-\theta L_t )I_t -\gamma I_t -I_t \phi\left(I_t \right), & I_0 &= i_0,
\end{aligned}
\right.
\end{equation}
then, for each $L \in \rL$, the solution to~\eqref{eq:SIRDreduced} is completely determined by any given initial condition $(s_0, i_0) \in \rT$, where
\begin{equation}\label{eq:Tdef}
\rT \coloneqq \{(s,i) \in [0,1]^2, \text{ s.t. } 0 \leq s+i \leq 1\}.
\end{equation}
Moreover, since the map $t \mapsto S_t + I_t$ is decreasing, we get that $S_t + I_t \leq s_0 + i_0$. Therefore, for any $t \geq 0$, any $(s_0, i_0) \in \rT$, and any $L \in \rL$, the state $(S_t, I_t)$ belongs to the set
\begin{equation*}
\rT(s_0, i_0) \coloneqq \{(s,i) \in \rT \colon s+i \leq s_0+i_0\}.
\end{equation*}
Clearly, specifying only $(s_0, i_0) \in \rT$ is not enough to determine the solution to the complete system~\eqref{eq:SIRD}, as $r_0$ is also needed. As a consequence of the discussion above and of Lemma~\ref{lem:Jdeterministic}, the functional $\tilde J$ does not depend on $r_0$, and hence our optimization problem depends only on the state variables $S$ and $I$.
\end{remark}

Equation~\eqref{eq:Jold_Jnew_equiv} also shows that the optimization problem introduced at the beginning of this section is equivalent to the optimization problem defined, for any given $(s,i) \in \rT$, as
\begin{equation}
\label{eq:optpb}
\tag{$P$}
\begin{aligned}
&\inf_{L\in\rL}J(L, s,i) \\
&\text{s.t. }
\left\{
  \begin{aligned}
    S^\prime_t&=-\beta(1-\theta L_t)S_t (1-\theta L_t )I_t, & S_0 = s,\\
    I^\prime_t&=\beta(1-\theta L_t )S_t (1-\theta L_t )I_t -\gamma I_t -I_t \phi\left(I_t \right), & I_0 = i,
  \end{aligned}
\right.
\end{aligned}
\end{equation}
where, for all $L \in \rL$ and $(s,i) \in \rT$,
\begin{equation}
  \label{eq:J_new}
  J(L,s,i) \coloneqq \int_0^{+\infty} \de^{-(r+\nu)t}\left(\left(S_t +I_t \right)L_t w+\left(\frac{w}{r}+\chi\right) I_t \phi\left(I_t \right)\right)\ud t \, .
\end{equation}
}%
Thus, from this point onward, we consider {\color{blue}problem~\eqref{eq:optpb}}. As usual, we introduce the value function for the above minimization problem, namely,
\begin{equation}\label{eq:valuefunction}
	V(s,i) \coloneqq \inf_{L\in\rL}J(L, s,i), \quad (s,i) \in \rT.
\end{equation}

{\color{blue}
To conclude this section, we provide a brief comparison between the optimization problem studied here, i.e., problem~\eqref{eq:optpb}, and the one analyzed in~\citep{AAL21}.
The optimization problem~\eqref{eq:optpb} is equivalent to the one studied in~\citep[Eq.~(7)]{AAL21}. We modified slightly the setup of~\citep{AAL21} including a class $D$ of dead people, that is, we consider an SIRD model; the dynamics of the classes $S$, $I$ and $R$ are left unchanged, thus our formulation is completely equivalent to that of~\citep{AAL21}, with the advantage that the population remains constant in our setting. We stress once more that we consider here the situation where testing is available and quarantine is not enforced. We make some remarks on other possible extensions of this model further below (see Remark~\ref{rem:estensioni}).

\begin{remark}
It is also worth noting that Lemma~\ref{lem:Jdeterministic} shows that optimization problem~\eqref{eq:optpb} is equivalent to the maximization one presented in~\citep[Eq.~(5)]{alvarez2020simple}. This provides a rigorous foundation to the arguments given in~\citep[p.~10]{alvarez2020simple} regarding the equivalence of these two problems.
\end{remark}
}

\section{{\color{blue}Properties of the value function and Dynamic Programming Principles}}\label{sec:dp}
{\color{blue}
In this section we begin our analysis of problem~\eqref{eq:optpb} with the dynamic programming approach. We derive in Section~\ref{sec:Vprop} a regularity result for the value function of this problem, defined in~\eqref{eq:valuefunction}; then, in Section~\ref{sec:dpprinc} we establish the forward and backward dynamic programming principles, that are used in Section~\ref{sec:HJB}.
}

\subsection{Properties of the value function}\label{sec:Vprop}
%
As shown in the previous section, the state variables for problem~\eqref{eq:optpb} are the number of susceptible and infected individuals, whose dynamics are given in~\eqref{eq:SIRDreduced}.
{\color{blue} To show some regularity results for the value function $V$ (see Proposition~\ref{prop:V_UCb} below), we need to} provide, first, a useful estimate concerning
{\color{blue} the unique solution to this system of ordinary differential equations.}

In what follows, we set for convenience $\mathbf{X}_t=(S_t,I_t)$, $t \geq 0$, and we introduce the notation $\mathbf{X}_t^{L,\mathbf{x}_0}$, $S_t^{L,s_0,i_0}$, $I_t^{L,s_0,i_0}$, to stress the dependence of the solution to~\eqref{eq:SIRDreduced} on the control strategy $L \in \rL$ and on the initial condition $\mathbf{x}_0=(s_0,i_0) \in \rT$. 
We also define the vector field
\begin{equation}\label{eq:bdef}
\bfb(s,i,l) \coloneqq
\begin{bmatrix}
-\beta si(1-\theta l)^2 \\
\beta si(1-\theta l)^2 -\gamma i - i \phi\left(i \right)
\end{bmatrix}
\, , \quad (s,i) \in \rT, \, l \in [0, \bar L] \, .
\end{equation}
In this way, we can write the system~\eqref{eq:SIRDreduced} as
\begin{equation*}
\begin{dcases}
  \mathbf{X}_t^\prime={\bfb}\left(\mathbf{X}_t,L_t\right), \\
  \bfX_0 = \bfx_0 \in \rT,
\end{dcases}
\end{equation*}
or equivalently, in integrated form,
\begin{equation}\label{eq:SIintegral}
\mathbf{X}_t^{L,\bfx_0} = \mathbf{x}_0 + \int_0^t \bfb\left(\mathbf{X}_s^{L,\bfx_0},L_s\right) \, \dd s, \quad t \geq 0.
\end{equation}


{\color{blue}
We have the following lemma.
\begin{lemma}\label{lem:init_data_dep}
Let $\bfX$ and $\widetilde \bfX$ be the two solutions to~\eqref{eq:SIRDreduced} corresponding to initial conditions $\bfx_0, \widetilde \bfx_0 \in \rT$ and strategies $L,\widetilde{L}\in\rL$, respectively. Then,
  \begin{equation}\label{eq:init_data_SI_gen}
    \left\Vert \bfX_t- \widetilde \bfX_t\right\Vert \leq \left(\left\Vert \bfx_0- \widetilde \bfx_0\right\Vert+ 4\theta(\bar L+1)\int_0^t\left\vert L_r-\widetilde L_r\right\vert\ud r\right) \de^{M_b t}.
  \end{equation}
In particular, if $L = \widetilde{L}$, then,
\begin{equation}\label{eq:init_data_SI}
\norm{\bfX_t-\widetilde \bfX_t} \leq \norm{\bfx_0 - \widetilde \bfx_0} \, \de^{M_b t}, \quad t \geq 0.
\end{equation}
\end{lemma}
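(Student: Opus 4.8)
The plan is to prove the general estimate \eqref{eq:init_data_SI_gen} via a Gronwall argument applied to the integrated form \eqref{eq:SIintegral}, and then obtain \eqref{eq:init_data_SI} as the special case $L = \widetilde L$. First I would write both solutions in integrated form, so that
\begin{equation*}
\bfX_t - \widetilde\bfX_t = \bfx_0 - \widetilde\bfx_0 + \int_0^t \left(\bfb(\bfX_s, L_s) - \bfb(\widetilde\bfX_s, \widetilde L_s)\right) \dd s,
\end{equation*}
and then take norms to get $\norm{\bfX_t - \widetilde\bfX_t} \leq \norm{\bfx_0 - \widetilde\bfx_0} + \int_0^t \norm{\bfb(\bfX_s, L_s) - \bfb(\widetilde\bfX_s, \widetilde L_s)} \dd s$. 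The crux is to split the integrand by adding and subtracting the intermediate term $\bfb(\widetilde\bfX_s, L_s)$, controlling $\norm{\bfb(\bfX_s, L_s) - \bfb(\widetilde\bfX_s, L_s)}$ by a state-Lipschitz bound and $\norm{\bfb(\widetilde\bfX_s, L_s) - \bfb(\widetilde\bfX_s, \widetilde L_s)}$ by a control-Lipschitz bound.

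Next I would establish these two Lipschitz estimates for the vector field $\bfb$ defined in \eqref{eq:bdef}. For the state-Lipschitz bound, I would use that $(s,i)$ ranges over the compact set $\rT$ (so $s, i \in [0,1]$ and $(1-\theta l)^2 \leq 1$), together with Assumption~\ref{as:phi} — namely the boundedness \eqref{eq:phibounded} and the Lipschitz property \eqref{eq:philip} of $\phi$ — to produce a constant $M_b > 0$ with $\norm{\bfb(s,i,l) - \bfb(s',i',l)} \leq M_b \norm{(s,i)-(s',i')}$; this is where the product term $\beta s i (1-\theta l)^2$ and the term $i\phi(i)$ must be handled, the latter using $\abs{i\phi(i) - i'\phi(i')} \leq \abs{i}\abs{\phi(i)-\phi(i')} + \abs{\phi(i')}\abs{i-i'}$. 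For the control-Lipschitz bound, I would fix the state $(s,i) \in \rT$ and differentiate or directly estimate $(1-\theta l)^2 - (1-\theta l')^2 = \theta(l-l')(\theta(l+l') - 2)$, whose absolute value is bounded by $\theta\abs{l-l'}(\theta(\bar L+1) + 2) \leq 4\theta\abs{l-l'}$ on $[0,\bar L]$ with $\bar L \leq 1$; combined with $\abs{\beta s i} \leq \beta$ in each of the two components this yields the factor $4\theta(\bar L+1)$ appearing in \eqref{eq:init_data_SI_gen} (tracking the constant carefully to match the stated coefficient).

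With these two bounds in hand, the norm inequality becomes
\begin{equation*}
\norm{\bfX_t - \widetilde\bfX_t} \leq \norm{\bfx_0 - \widetilde\bfx_0} + 4\theta(\bar L+1)\int_0^t \abs{L_r - \widetilde L_r}\dd r + M_b \int_0^t \norm{\bfX_s - \widetilde\bfX_s}\dd s,
\end{equation*}
and I would conclude by applying Gronwall's inequality, treating the first two terms on the right as the (nondecreasing) forcing function. This directly yields \eqref{eq:init_data_SI_gen}; setting $L = \widetilde L$ kills the control term and gives \eqref{eq:init_data_SI}. I expect the main obstacle to be purely bookkeeping: pinning down the explicit constant $4\theta(\bar L+1)$ in the control-Lipschitz estimate so that it matches the statement, since the naive bound gives a somewhat larger constant and one must be slightly careful about how the two components of $\bfb$ contribute and how the Gronwall forcing term is assembled. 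The analytic content — Lipschitzness of $\bfb$ on the compact invariant set $\rT$ and a standard Gronwall estimate — is routine.
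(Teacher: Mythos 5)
Your proposal follows the paper's proof essentially verbatim: the paper likewise writes both solutions in integrated form, invokes the uniform-in-$l$ state-Lipschitz bound~\eqref{eq:blip} with constant $M_b$ together with the control term $4\theta(\bar L+1)\int_0^t\vert L_r-\widetilde L_r\vert\ud r$, and concludes by Gronwall's lemma, with $L=\widetilde L$ immediately giving~\eqref{eq:init_data_SI}. Your concern about the constant is apt but immaterial: the paper asserts the coefficient without computation (a careful estimate of $\beta si\left[(1-\theta l)^2-(1-\theta l')^2\right]$ over $\rT$ naturally produces a factor proportional to $\beta\theta$ rather than exactly $4\theta(\bar L+1)$), yet only the case $L=\widetilde L$, where the control term vanishes, is used in the sequel.
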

}

\begin{proof}
It is easy to show that the vector field $\bfb${\color{blue}, introduced in~\eqref{eq:bdef},} is bounded on $\rT\times[0,\bar L]$ and that it is Lipschitz continuous in $(s,i) \in \rT$, uniformly with respect to $l \in [0, \bar L]$. More precisely,
we have that
\begin{equation}\label{eq:bbounded}
\sup_{(s,i,l) \in \rT \times [0, \bar L]} \norm{\bfb(s,i,l)} \leq 3\left[\beta + \gamma \right] \eqqcolon K_b,
\end{equation}
and, for all $(s,i), \, (s',i') \in \rT$,
\begin{equation}\label{eq:blip}
\sup_{l \in [0, \bar L]} \norm{\bfb(s,i,l) - \bfb(s',i',l)} \leq M_b \norm{(s,i) - (s',i')},
\end{equation}
with $M_b \coloneqq 2\left[\beta + \gamma + M_\phi\right]${\color{blue}, where $M_\phi$ is the Lipschitz constant appearing in~\eqref{eq:philip}.}

Thanks to~\eqref{eq:blip}, we deduce that
{\color{blue}
\begin{equation*}
\norm{\bfX_t - \widetilde \bfX_t} \leq \norm{\bfx_0 - \widetilde \bfx_0} + 4\theta(\bar L+1)\int_0^t\left\vert L_r-\widetilde L_r\right\vert\ud r + M_b \int_0^t \norm{\bfX_s - \widetilde \bfX_s} \, \dd s, \quad t \geq 0,
\end{equation*}
}
and hence a simple application of Gronwall's lemma (see, e.g., \citep[Chapter~III, Section~5]{bardi1997:optctrl}, or \citep[Appendix~A]{flemingrishel1975:soc}) {\color{blue}yields~\eqref{eq:init_data_SI_gen}.
Setting $L = \widetilde L$ we immediately deduce~\eqref{eq:init_data_SI}.}
\end{proof}

{\color{blue}Let us introduce} the running cost function appearing inside the functional $J$ given in~\eqref{eq:J_new}, i.e.,
\begin{equation}\label{eq:runningcost}
f(s,i,l) \coloneqq \left(s + i \right)l w+\left(\frac{w}{r}+\chi\right) i \phi\left(i\right), \quad (s,i,l) \in \rT \times [0, \bar L].
\end{equation}
It is not hard to show that $f$ is non-negative and bounded on $\rT \times [0, \bar L]$ and that it is Lipschitz continuous in $(s,i)\in \rT$, uniformly with respect to $l\in[0,\bar L]$. {\color{blue}More precisely,}
\begin{equation}\label{eq:fbounded}
\sup_{(s,i,l) \in \rT \times [0, \bar L]} f(s,i,l) \leq \bar L w + \left(\frac{w}{r}+\chi\right)\gamma \eqqcolon K_f,
\end{equation}
and, for all $(s,i), \, (s',i') \in \rT$,
\begin{equation}\label{eq:flip}
\sup_{l \in [0, \bar L]} \abs{f(s,i,l) - f(s',i',l)} \leq M_f \norm{(s,i) - (s',i')},
\end{equation}
where $M_f \coloneqq 2\left[\bar Lw + \left(\dfrac wr + \chi\right)(\gamma + M_\phi)\right]$ {\color{blue}and $M_\phi$ is the Lipschitz constant appearing in~\eqref{eq:philip}}

From these facts, we deduce the following result.
\begin{proposition}\label{prop:V_UCb}
The value function $V$ given in~\eqref{eq:valuefunction} is non-negative, bounded, and uniformly continuous on $\rT$. If, moreover, $r + \nu \geq M_b$, where $M_b$ is the constant appearing in~\eqref{eq:blip}, then $V$ is Lipschitz continuous on $\rT$.
\end{proposition}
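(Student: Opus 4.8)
The plan is to establish the three stated properties of $V$ in sequence, deriving each from the corresponding property of the running cost $f$ together with the stability estimate of Lemma~\ref{lem:init_data_dep}. Non-negativity is immediate: since $f \geq 0$ on $\rT \times [0,\bar L]$ (each term in~\eqref{eq:runningcost} is non-negative because $s,i,l,w \geq 0$ and $\phi \geq 0$), the integrand in $J(L,s,i)$ is non-negative for every $L \in \rL$, so $J \geq 0$ and hence $V = \inf_L J \geq 0$. Boundedness is equally direct: using the uniform bound $f \leq K_f$ from~\eqref{eq:fbounded} and the exponential weight, each $J(L,s,i) \leq K_f \int_0^{+\infty} \de^{-(r+\nu)t}\,\dd t = K_f/(r+\nu)$, so $V$ is bounded above by $K_f/(r+\nu)$ and below by $0$.

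The main work is the continuity estimate. First I would record the elementary inequality that for any two controls $L, \widetilde L$ and any initial data $\bfx_0, \widetilde\bfx_0$,
\begin{equation*}
\abs{J(L,\bfx_0) - J(\widetilde L, \widetilde\bfx_0)} \leq \int_0^{+\infty} \de^{-(r+\nu)t}\, \abs{f(\bfX_t^{L,\bfx_0}, L_t) - f(\widetilde\bfX_t^{\widetilde L, \widetilde\bfx_0}, \widetilde L_t)}\,\dd t,
\end{equation*}
and then bound the integrand. To compare values at two initial points $\bfx_0, \widetilde\bfx_0$, I would use the \emph{same} control $L$ in both problems; then $L_t = \widetilde L_t$, so the control slot of $f$ matches and only the state slot differs. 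Applying the Lipschitz bound~\eqref{eq:flip} for $f$ in the state variable together with the same-control stability estimate~\eqref{eq:init_data_SI}, the integrand is at most $M_f \norm{\bfX_t^{L,\bfx_0} - \bfX_t^{L,\widetilde\bfx_0}} \leq M_f \norm{\bfx_0 - \widetilde\bfx_0}\,\de^{M_b t}$. Hence
\begin{equation*}
\abs{J(L,\bfx_0) - J(L,\widetilde\bfx_0)} \leq M_f \norm{\bfx_0 - \widetilde\bfx_0} \int_0^{+\infty} \de^{-(r+\nu - M_b)t}\,\dd t.
\end{equation*}
Taking infima over $L \in \rL$ and using the standard fact that $\abs{\inf_L J(L,\bfx_0) - \inf_L J(L,\widetilde\bfx_0)} \leq \sup_L \abs{J(L,\bfx_0) - J(L,\widetilde\bfx_0)}$ transfers the bound to $V$.

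The delicate point is the behaviour of the time integral $\int_0^{+\infty} \de^{-(r+\nu-M_b)t}\,\dd t$, whose finiteness depends on the sign of $r+\nu - M_b$. When $r + \nu \geq M_b$ (in fact one needs strict positivity of the exponent, so I would treat the borderline case or simply note that $r,\nu>0$ gives $r+\nu>M_b$ when $r+\nu\geq M_b$ fails to be sharp; more carefully, the case $r+\nu = M_b$ must be handled by a truncation argument), the integral equals $1/(r+\nu-M_b) < \infty$ and the above yields a genuine Lipschitz estimate $\abs{V(\bfx_0) - V(\widetilde\bfx_0)} \leq \frac{M_f}{r+\nu-M_b}\norm{\bfx_0 - \widetilde\bfx_0}$, proving the second assertion. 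This is the step I expect to be the real obstacle. To obtain \emph{uniform continuity} without the discount condition, one cannot integrate the growing factor $\de^{M_b t}$ against the decaying weight over all of $[0,+\infty)$. The remedy is to split the time integral at a threshold $T$: on $[0,T]$ use the Lipschitz-in-state argument above to get a bound of order $\norm{\bfx_0 - \widetilde\bfx_0}\,\de^{(M_b)T}/(r+\nu)$ (or the exact exponential integral over the finite window), while on $[T,+\infty)$ use the crude uniform bound $f \leq K_f$ on both terms to get a tail bound of order $K_f\,\de^{-(r+\nu)T}/(r+\nu)$, independent of the initial data. Given $\epsilon > 0$, first choose $T$ large enough to make the tail smaller than $\epsilon/2$, then choose $\delta$ small enough that the finite-window term is below $\epsilon/2$ whenever $\norm{\bfx_0 - \widetilde\bfx_0} < \delta$; since $T$ and $\delta$ depend only on $\epsilon$ and the fixed constants $M_b, M_f, K_f, r, \nu$, this establishes uniform continuity of $V$ on $\rT$.
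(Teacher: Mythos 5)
Your proposal is correct and takes essentially the same route as the paper's proof: the same ingredients \eqref{eq:fbounded}, \eqref{eq:flip}, \eqref{eq:init_data_SI} (running a common control from both initial data, which is exactly what the paper's $\epsilon$-optimal-control step amounts to, since your ``standard fact'' about infima is proved that way), the same splitting of the time integral at a threshold $T$ with tail bound of order $K_f\,\de^{-(r+\nu)T}/(r+\nu)$, and the same Lipschitz constant $M_f/(r+\nu-M_b)$ obtained by letting $T \to +\infty$. Your parenthetical caution about the borderline case is well placed: the paper states the Lipschitz claim for $r+\nu \geq M_b$, but its own passage to the limit $T \to +\infty$ yields a finite constant only when $r+\nu > M_b$ (at equality the finite-window bound is $M_f T$, which no truncation rescues), so strict inequality is what both arguments actually deliver --- consistent with the hypothesis $r+\nu > M_b$ used later in Theorem~\ref{thm:optimal}.
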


\begin{proof}
The value function $V$ is clearly non-negative, because $f$ is.
Boundedness easily follows from~\eqref{eq:fbounded}. Indeed, for all $L \in \rL$ and all $(s,i) \in \rT$,
\begin{align*}
J(L,s,i)
&= \int_0^{+\infty} \de^{-(r+\nu)t} f(S_t, I_t, L_t) \ud t
\\
&\leq \sup_{(s,i,l) \in \rT \times [0, \bar L]} f(s,i,l) \int_0^{+\infty} \de^{-(r+\nu)t} \, \dd t \leq \dfrac{K_f}{r+\nu},
\end{align*}
and hence
\begin{equation*}
0 \leq V(s,i) = \inf_{L \in \rL} J(L,s,i) \leq \dfrac{K_f}{r+\nu}.
\end{equation*}
To prove uniform continuity it is enough to show that $V$ is continuous on $\rT$, because $\rT$ is a compact subset of $\R^2$. Let us fix $\epsilon > 0$, $(s,i), \, (\tilde s, \tilde i) \in \rT$, and the corresponding solutions to~\eqref{eq:SIRDreduced} $(S,I)$, $(\widetilde S,\widetilde I)$, for any given admissible control. Consider an $\epsilon$-optimal control for the minimization problem with initial data $(\tilde s,\tilde i)$, that is, $L^\epsilon \in \rL$ such that
\begin{equation*}
V(\tilde s,\tilde i) \geq \int_0^{+\infty} \de^{-(r+\nu)t} f(\widetilde S_t, \widetilde I_t, L^\epsilon_t) \, \dd t - \epsilon.
\end{equation*}
Then, for a constant $T > 0$ to be chosen later, using~\eqref{eq:fbounded}, \eqref{eq:flip}, and~\eqref{eq:init_data_SI}, we obtain
\begin{align*}
V(s,i) - V(\tilde s,\tilde i)
&\leq
\int_0^T \de^{-(r+\nu)t} \abs{f(S_t, I_t, L^\epsilon_t) - f(\widetilde S_t, \widetilde I_t, L^\epsilon_t)} \, \dd t
\\
&\qquad \qquad + \int_T^{+\infty} \de^{-(r+\nu)t} \abs{f(S_t, I_t, L^\epsilon_t) - f(\widetilde S_t, \widetilde I_t, L^\epsilon_t)} \, \dd t + \epsilon
\\
&\leq M_f \int_0^T \de^{-(r+\nu)t} \norm{(S_t, I_t) - (\widetilde S_t, \widetilde I_t)} \, \dd t + 2 K_f \int_T^{+\infty} \de^{-(r+\nu)t} + \epsilon
\\
&\leq M_f \int_0^T \de^{-(r+\nu)t} \norm{(s,i)-(\tilde s, \tilde i)} \, \de^{M_b t} \, \dd t + \dfrac{2 K_f}{r+\nu} \de^{-(r+\nu)T} + \epsilon
\\
&\leq \dfrac{M_f}{r+\nu-M_b}(1-\de^{-(r+\nu-M_b)T}) \norm{(s,i)-(\tilde s, \tilde i)} + \dfrac{2 K_f}{r+\nu} \de^{-(r+\nu)T} + \epsilon.
\end{align*}
The second term of the right hand side of the last inequality can be made smaller than $\epsilon$ choosing $T$ large enough, while the first one can be made smaller than $\epsilon$ choosing an appropriate $\delta > 0$ such that $\norm{(s,i)-(\tilde s, \tilde i)} < \delta$.
Exchanging the roles of $(s,i), \, (\tilde s, \tilde i)$, we get that $V$ is continuous on $\rT$.

Finally, if $r + \nu \geq M_b$, we can take $T \to +\infty$ and $\epsilon \to 0^+$ and get from the last inequality
\begin{equation*}
\abs{V(s,i) - V(s',i')} \leq \dfrac{M_f}{r+\nu-M_b} \norm{(s,i)-(\tilde s, \tilde i)},
\end{equation*}
that is, $V$ is Lipschitz continuous on $\rT$.
\end{proof}

\subsection{Dynamic Programming principles}\label{sec:dpprinc}

In this subsection we provide the dynamic programming principles for {\color{blue}optimization problem~\eqref{eq:optpb}.}

We start with the following standard result, stating that the value function $V$ satisfies the \emph{Dynamic Programming Principle}. The proof is analogous to that of~\citep[Proposition~III.2.5]{bardi1997:optctrl} and is thus omitted.

\begin{proposition}\label{prop:dpp}
For all $(s,i) \in \rT$ and all $T > 0$, the value function $V$ verifies
\begin{equation*}
V(s,i) = \inf_{L \in \rL} \left\{\int_0^T \de^{-(r+\nu)t} f(S_t, I_t,L_t) \, \dd t + \de^{-(r+\nu)T} V(S_T, I_T)\right\}.
\end{equation*}
\end{proposition}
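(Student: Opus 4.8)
The plan is to prove the two inequalities $V \geq W$ and $V \leq W$ separately, where $W(s,i)$ denotes the right-hand side of the claimed identity. The cornerstone of both is the time-homogeneity of the dynamics: since the vector field $\bfb$ in~\eqref{eq:bdef} carries no explicit dependence on time, the uniqueness of solutions to~\eqref{eq:SIRDreduced} (guaranteed by the Lipschitz estimate of Lemma~\ref{lem:init_data_dep}) delivers the flow, or cocycle, property. Precisely, for any $L \in \rL$ and any $T > 0$, introducing the shifted control $\hat L_u \coloneqq L_{T+u}$, one has $\mathbf{X}_{T+u}^{L,\bfx_0} = \mathbf{X}_u^{\hat L,\, \mathbf{X}_T^{L,\bfx_0}}$ for all $u \geq 0$, because both sides solve the same autonomous system with the common initial datum $\mathbf{X}_T^{L,\bfx_0}$ at $u = 0$.

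First I would record the resulting additive decomposition of the cost. Splitting the integral defining $J(L,s,i)$ at time $T$, performing the change of variables $t = T + u$ in the tail, and invoking the flow property, one obtains
\begin{equation*}
J(L,s,i) = \int_0^T \de^{-(r+\nu)t} f(S_t,I_t,L_t)\,\ud t + \de^{-(r+\nu)T}\, J(\hat L, S_T, I_T).
\end{equation*}
This single identity is the engine behind both estimates; note that the discount factor $\de^{-(r+\nu)T}$ factors cleanly out of the tail precisely because the functional is an exponentially discounted integral with constant rate.

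For $V \geq W$: since $J(\hat L, S_T, I_T) \geq V(S_T, I_T)$ by definition of the value function, the decomposition gives $J(L,s,i) \geq \int_0^T \de^{-(r+\nu)t} f(S_t,I_t,L_t)\,\ud t + \de^{-(r+\nu)T} V(S_T,I_T) \geq W(s,i)$ for every $L \in \rL$; taking the infimum over $L$ yields $V(s,i) \geq W(s,i)$. For $V \leq W$: I would fix $\epsilon > 0$ and an arbitrary $L^1 \in \rL$, which produces the endpoint $(S_T,I_T) = \mathbf{X}_T^{L^1,(s,i)}$, then select an $\epsilon$-optimal control $L^2 \in \rL$ for the problem started at $(S_T,I_T)$, so that $J(L^2, S_T, I_T) \leq V(S_T,I_T) + \epsilon$. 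Concatenating these into
\begin{equation*}
L_t \coloneqq L^1_t\, \ind_{[0,T)}(t) + L^2_{t-T}\, \ind_{[T,+\infty)}(t),
\end{equation*}
which is again Borel-measurable with values in $[0,\bar L]$ and hence lies in $\rL$, and observing that its shift $\hat L$ is exactly $L^2$, the decomposition together with $V(s,i) \leq J(L,s,i)$ gives $V(s,i) \leq \int_0^T \de^{-(r+\nu)t} f(S_t,I_t,L^1_t)\,\ud t + \de^{-(r+\nu)T}\bigl(V(S_T,I_T) + \epsilon\bigr)$. Taking the infimum over $L^1 \in \rL$ and letting $\epsilon \to 0^+$ yields $V(s,i) \leq W(s,i)$.

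The hard part will not be any single computation but rather the two structural checks that make the argument rigorous: the justification of the flow property through uniqueness of the autonomous ODE, and the verification that the concatenated control $L$ is genuinely admissible, i.e. Borel-measurable and $[0,\bar L]$-valued, so that the suboptimality bound $V(s,i) \leq J(L,s,i)$ may be applied. Everything else — the splitting of the integral, the change of variables, and the factoring of the discount — is routine bookkeeping.
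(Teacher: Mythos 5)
Your proof is correct and is essentially the same argument the paper points to: the paper omits the proof, citing Proposition~III.2.5 of \citet{bardi1997:optctrl}, whose proof is exactly your scheme --- the flow property of the autonomous dynamics, the additive splitting of the discounted cost at time $T$, and the two inequalities via $\epsilon$-optimal controls and concatenation. The two structural checks you flag (uniqueness giving the cocycle identity, Borel measurability of the concatenated control) are indeed the only points requiring care, and both go through as you describe since the dynamics leave $\rT$ invariant for every admissible control.
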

The following facts can be easily deduced from the Dynamic Programming Principle (for a slightly different approach, see \citep[Theorems~3.1, 3.2]{flemingrishel1975:soc}).
\begin{proposition}\label{prop:Vproperties}
For all $L \in \rL$, the function
\begin{equation*}
t \mapsto \left\{\int_0^t \de^{-(r+\nu)u} f(S_u, I_u,L_u) \, \dd u + \de^{-(r+\nu)t} V(S_t, I_t)\right\},
\end{equation*}
is non-decreasing and it is constant if and only if $L$ is optimal.
\end{proposition}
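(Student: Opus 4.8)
The plan is to read off both assertions directly from the Dynamic Programming Principle (Proposition~\ref{prop:dpp}), using only the autonomy of the state dynamics~\eqref{eq:SIRDreduced} and the exponential form of the discount factor. Fix $L \in \rL$ and an initial datum $(s,i) \in \rT$, write $(S,I)$ for the corresponding solution to~\eqref{eq:SIRDreduced}, and denote by $g(t)$ the function appearing in the statement.

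For monotonicity I would fix $0 \le t_1 < t_2$ and apply the Dynamic Programming Principle at the intermediate point $(S_{t_1},I_{t_1}) \in \rT$ over the horizon $T = t_2 - t_1$, feeding into it the time-shifted control $L^{t_1}_u \coloneqq L_{t_1+u}$. Since $\bfb$ does not depend on time, uniqueness of solutions to~\eqref{eq:SIRDreduced} ensures that the trajectory issued from $(S_{t_1},I_{t_1})$ under $L^{t_1}$ coincides with $u \mapsto (S_{t_1+u},I_{t_1+u})$. Because Proposition~\ref{prop:dpp} expresses $V(S_{t_1},I_{t_1})$ as an infimum over all admissible controls, inserting this particular control and changing variables $v = t_1 + u$ gives
\[
V(S_{t_1},I_{t_1}) \le \int_{t_1}^{t_2} \de^{-(r+\nu)(v-t_1)} f(S_v,I_v,L_v)\,\ud v + \de^{-(r+\nu)(t_2-t_1)} V(S_{t_2},I_{t_2}).
\]
Multiplying through by $\de^{-(r+\nu)t_1}$ and adding $\int_0^{t_1}\de^{-(r+\nu)u} f(S_u,I_u,L_u)\,\ud u$ to both sides rearranges exactly into $g(t_1) \le g(t_2)$, so $g$ is non-decreasing.

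To handle the optimality equivalence I would identify the two endpoints of $g$. At $t = 0$ the integral vanishes and $g(0) = V(s,i)$. As $t \to +\infty$ the running-cost integral converges to $J(L,s,i)$ by the definition~\eqref{eq:J_new}, while the boundary term vanishes thanks to the boundedness of $V$ from Proposition~\ref{prop:V_UCb}, since $0 \le \de^{-(r+\nu)t} V(S_t,I_t) \le \tfrac{K_f}{r+\nu}\de^{-(r+\nu)t} \to 0$. Hence $\lim_{t\to+\infty} g(t) = J(L,s,i)$. A non-decreasing function is constant if and only if its two endpoint values agree, so $g$ is constant if and only if $V(s,i) = J(L,s,i)$, which by the definition~\eqref{eq:valuefunction} of $V$ as an infimum is precisely the assertion that $L$ is optimal for the initial datum $(s,i)$.

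The only genuinely delicate point is the application of Proposition~\ref{prop:dpp} \emph{from the intermediate time $t_1$}: the principle is stated with a fixed initial datum and horizon, so I must justify that it applies verbatim at $(S_{t_1},I_{t_1})$ and that the tail of the original trajectory is reproduced by the shifted control. This is exactly where the autonomy of $\bfb$ and the uniqueness of solutions to~\eqref{eq:SIRDreduced} are used; everything else is bookkeeping with the exponential discount and the elementary observation that a monotone function is constant iff its endpoint values coincide.
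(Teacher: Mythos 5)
Your proof is correct and is precisely the argument the paper intends: the paper omits the proof, saying only that the statement ``can be easily deduced from the Dynamic Programming Principle'', and your deduction --- applying the easy inequality of Proposition~\ref{prop:dpp} at the intermediate point $(S_{t_1},I_{t_1})$ with the time-shifted control (justified by autonomy of $\bfb$ and uniqueness of solutions), then comparing the endpoints $g(0)=V(s,i)$ and $\lim_{t\to+\infty}g(t)=J(L,s,i)$ --- is the standard way to carry this out. All the delicate points are correctly handled, including the measurability of the shifted control, the vanishing of the boundary term via the boundedness of $V$ from Proposition~\ref{prop:V_UCb}, and the observation that a non-decreasing function is constant iff its initial value equals its limit.
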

As a result of the previous proposition, we obtain the following useful regularity result for the value function $V$ evaluated at optimal trajectories of the system~\eqref{eq:SIRDreduced}.
\begin{corollary}\label{cor:V_diff_opt_traj}
Let $\hat L \in \rL$ be an optimal control and denote by $(\hat S, \hat I)$ the corresponding optimal trajectory of the system~\eqref{eq:SIRDreduced}. Then, for almost every $t \geq 0$, there exists $V^\prime(\hat S_t, \hat I_t)$ and
\begin{equation*}
V^\prime(\hat S_t, \hat I_t) = (r+\nu) V(\hat S_t, \hat I_t) - f(\hat S_t, \hat I_t, \hat L_t), \quad t \geq 0.
\end{equation*}
\end{corollary}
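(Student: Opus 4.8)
The plan is to leverage Proposition~\ref{prop:Vproperties} applied to the optimal control $\hat L$. Since $\hat L$ is optimal, that proposition guarantees that the map
\[
t \mapsto \int_0^t \de^{-(r+\nu)u} f(\hat S_u, \hat I_u, \hat L_u)\, \ud u + \de^{-(r+\nu)t} V(\hat S_t, \hat I_t)
\]
is constant in $t$; evaluating at $t = 0$ shows this constant equals $V(\hat S_0, \hat I_0)$. The first step is therefore to rearrange this identity into
\[
\de^{-(r+\nu)t} V(\hat S_t, \hat I_t) = V(\hat S_0, \hat I_0) - \int_0^t \de^{-(r+\nu)u} f(\hat S_u, \hat I_u, \hat L_u)\, \ud u.
\]

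Next I would establish the regularity of the function $\psi(t) \coloneqq \de^{-(r+\nu)t} V(\hat S_t, \hat I_t)$. The integrand appearing above is bounded in absolute value by $K_f$, since $f$ is bounded by~\eqref{eq:fbounded} and $\de^{-(r+\nu)u} \leq 1$ for $u \geq 0$; hence the integral term is Lipschitz continuous in $t$, and so $\psi$ is Lipschitz continuous on every compact subinterval of $[0,+\infty)$. Multiplying by the smooth factor $\de^{(r+\nu)t}$, which is locally Lipschitz with locally bounded derivative, I conclude that $t \mapsto V(\hat S_t, \hat I_t) = \de^{(r+\nu)t}\psi(t)$ is locally Lipschitz, and therefore differentiable at almost every $t \geq 0$. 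This is precisely the assertion that $V^\prime(\hat S_t, \hat I_t)$ exists almost everywhere.

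Finally, at each such point of differentiability I would differentiate the product $V(\hat S_t, \hat I_t) = \de^{(r+\nu)t}\psi(t)$. By the fundamental theorem of calculus applied to the rearranged identity, $\psi^\prime(t) = -\de^{-(r+\nu)t} f(\hat S_t, \hat I_t, \hat L_t)$ for almost every $t$, so the product rule yields
\[
V^\prime(\hat S_t, \hat I_t) = (r+\nu)\de^{(r+\nu)t}\psi(t) + \de^{(r+\nu)t}\psi^\prime(t) = (r+\nu) V(\hat S_t, \hat I_t) - f(\hat S_t, \hat I_t, \hat L_t),
\]
which is the desired formula. I expect the only delicate point to be the transfer of almost-everywhere differentiability from $\psi$ to the composite $t \mapsto V(\hat S_t, \hat I_t)$; this is clean precisely because the conversion factor $\de^{(r+\nu)t}$ is $C^\infty$, so the product of a locally Lipschitz function with it is again locally Lipschitz and the product rule is valid at every common point of differentiability, that is, almost everywhere.
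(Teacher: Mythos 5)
Your proof is correct and follows essentially the same route as the paper: both invoke Proposition~\ref{prop:Vproperties} to get constancy of $t \mapsto \int_0^t \de^{-(r+\nu)u} f(\hat S_u, \hat I_u,\hat L_u)\, \dd u + \de^{-(r+\nu)t} V(\hat S_t, \hat I_t)$ along the optimal trajectory, rearrange, and differentiate at the Lebesgue points of the integrand. Your added detail --- the $K_f$ bound from~\eqref{eq:fbounded} giving local Lipschitz continuity of $t \mapsto V(\hat S_t, \hat I_t)$ before applying the fundamental theorem of calculus and the product rule --- merely makes explicit what the paper leaves implicit in the phrase ``$g$ is differentiable at all Lebesgue points of $\hat L$''.
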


\begin{proof}
We follow closely the arguments given in the proof of Proposition~4.13 in~\citep{frenigozzipignotti2008:optstrat}.
Let us consider the function
\begin{equation}\label{eq:gdef}
g(t) \coloneqq \int_0^t \de^{-(r+\nu)u} f(\hat S_u, \hat I_u,\hat L_u) \, \dd u + \de^{-(r+\nu)t} V(\hat S_t, \hat I_t).
\end{equation}
Since $\hat L$ is an optimal control, we know from Proposition~\ref{prop:Vproperties} that $g$ is constant. Moreover, $g$ is differentiable at all Lebesgue points of $\hat L$, which implies that $g'(t) = 0$, for almost all $t \geq 0$.
From~\eqref{eq:gdef} we deduce that
\begin{equation*}
V(\hat S_t, \hat I_t) = \de^{(r+\nu)t} \left\{g(t) - \int_0^t \de^{-(r+\nu)u} f(\hat S_u, \hat I_u,\hat L_u) \, \dd u\right\}, \quad t \geq 0,
\end{equation*}
and hence, for almost all $t \geq 0$, $V^\prime(\hat S_t, \hat I_t)$ exists and satisfies
\begin{equation*}
V^\prime(\hat S_t, \hat I_t) = (r+\nu)\de^{(r+\nu)t} \left\{g(t) - \int_0^t \de^{-(r+\nu)u} f(\hat S_u, \hat I_u,\hat L_u) \, \dd u\right\} - f(\hat S_t, \hat I_t,\hat L_t), \quad t \geq 0,
\end{equation*}
whence the claim.
\end{proof}

We want to show, next, that a result analogous to Proposition~\ref{prop:dpp} holds for the backward trajectories of the system~\eqref{eq:SIRDreduced}, that are given by the solution $\bfY$ of
\begin{equation}\label{eq:SIRDbwd}
  \mathbf{Y}_t^{L,\bfy_0} = \mathbf{y}_0-\int_t^0 {\bfb}\left(\mathbf{Y}_r^{L,\bfy_0}, L_r\right)\ud r, \quad t < 0,
\end{equation}
for any initial condition $\bfy_0 = (s_0,i_0) \in \rT$ and any {\color{blue}Borel-}measurable function $L \colon (-\infty,0]\to[0,\bar L]$. 
{\color{blue}To denote the solution to~\eqref{eq:SIRDbwd} we will use the notation $\bfY_t^{L,\bfy_0}$ or $\bfY_t^{L,s_0,i_0}$, $t \geq 0$, to stress its dependence on $L$ and $\bfy_0 = (s_0,i_0) \in \rT$.
}
We have to restrict the set of admissible controls for the backward equation~\eqref{eq:SIRDbwd}, as there is no guarantee that the backward trajectories remain in the state space $\rT$. Thus, we define, for any given $(s,i) \in \rT$, the set
\begin{equation*}
  \rL^-(s,i)=\left\{L\colon(-\infty,0]\to[0,\bar L] \text{ Borel-measurable, s.t. } \mathbf{Y}_t^{L,s,i} \in \rT, \, \forall t \in (-\infty,0] \right\}.
\end{equation*}

\begin{remark}\label{rem:Lmenovuoto}
It is important to note that, if $(s,i) \in \rT$ are such that $s+i = 1$, with $i \neq 0$, then $\rL^-(s,i) = \emptyset$. Indeed, from~\eqref{eq:SIRDreduced} we deduce that
\begin{equation*}
S_t + I_t = s + i + \int_t^0 \left(\gamma + \phi(I_t)\right) I_t \, \dd t, \quad t < 0,
\end{equation*}
and hence $S_t + I_t > 1$, for all $t < 0$.
\end{remark}

We need also the following definition.
\begin{definition}\label{def:optimalpoint}
We say that a point $(s,i) \in \rT$ is \emph{optimal} if there exist $(s_0,i_0)\in \rT$, $t^\ast>0$, and an optimal strategy $L^\ast \in \rL$ -- i.e., $V\left(s_0,i_0\right)=J(L^\ast,s_0,i_0)$ -- such that $(s,i)=\mathbf{X}_{t^\ast}^{\left(L^\ast;s_0,i_0\right)}$. We denote the set of optimal points by $\rO$.
\end{definition}

\begin{remark}
The definition above can be rephrased saying that the controller can drive the system starting from $(s_0,i_0)$ to $(s,i)$ in finite time with an optimal strategy $L^\ast$.
\end{remark}


We are now ready to state the \emph{Backward Dynamic Programming Principle}. Its proof is very similar to the one given of~\citep[Proposition~4.7]{frenigozzipignotti2008:optstrat} (see also, \citep[Proposition~2.25]{bardi1997:optctrl}).
\begin{proposition}
  \label{prop:BDP}
  For every $(s,i) \in \rT$, every $t>0$, and every $L\in\rL^-(s,i)$, the value function $V$ satisfies
  \begin{equation*}
    V(s,i)\geq V\left(\mathbf{Y}_{-t}^{\left(L;s,i\right)}\right) \de^{(r+\nu)t}-\int_0^t f\left(\mathbf{Y}_{-u}^{\left(L;s,i\right)},L_{-u}\right) \de^{(r+\nu)u}\ud u\ \ .
  \end{equation*}
  Moreover, if $(s,i)\in\rO$ we have that, for every $0<t<t^\ast$ (where $t^\ast$ is given in Definition~\ref{def:optimalpoint}),
    \begin{equation*}
      V(s,i) = \sup_{L\in\rL^-(s,i)}\left\{V\left(\mathbf{Y}_{-t}^{\left(L;s,i\right)}\right) \de^{(r+\nu)t}-\int_0^t f\left(\mathbf{Y}_{-u}^{\left(L;s,i\right)},L_{-u}\right) \de^{(r+\nu)u}\ud u\right\}\ .
  \end{equation*}
\end{proposition}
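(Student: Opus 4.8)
The plan is to exploit the exact correspondence between backward trajectories issued from $(s,i)$ and forward trajectories issued from the reached point $\mathbf{Y}_{-t}^{(L;s,i)}$, and then to reduce both assertions to the forward Dynamic Programming Principle (Proposition~\ref{prop:dpp}) and to the constancy property in Proposition~\ref{prop:Vproperties}. The basic observation is that $\mathbf{Y}^{L,\bfy_0}$ solves the \emph{same} autonomous system $\mathbf{X}^\prime=\bfb(\mathbf{X},L)$ as the forward flow, only read at negative times. Hence, fixing $(s,i)\in\rT$, $t>0$, and $L\in\rL^-(s,i)$, and setting $(s',i')\coloneqq\mathbf{Y}_{-t}^{(L;s,i)}\in\rT$, the time-shifted control $\tilde L_u\coloneqq L_{u-t}$ (for $u\in[0,t]$, extended arbitrarily in $\rL$ afterwards) produces a forward trajectory that retraces the backward one: by uniqueness $\mathbf{X}_u^{\tilde L,s',i'}=\mathbf{Y}_{u-t}^{(L;s,i)}$ for $u\in[0,t]$, so that $\mathbf{X}_t^{\tilde L,s',i'}=(s,i)$.

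For the inequality, I would apply Proposition~\ref{prop:dpp} at $(s',i')$ over horizon $t$ to this particular control $\tilde L$, obtaining
\[
V(s',i')\leq\int_0^t\de^{-(r+\nu)u}f\!\left(\mathbf{X}_u^{\tilde L,s',i'},\tilde L_u\right)\ud u+\de^{-(r+\nu)t}V(s,i).
\]
Substituting $\mathbf{X}_u^{\tilde L,s',i'}=\mathbf{Y}_{u-t}^{(L;s,i)}$ and $\tilde L_u=L_{u-t}$, and changing variables $v=t-u$ (so that $\de^{-(r+\nu)u}=\de^{-(r+\nu)t}\de^{(r+\nu)v}$), the integral becomes $\de^{-(r+\nu)t}\int_0^t\de^{(r+\nu)v}f(\mathbf{Y}_{-v}^{(L;s,i)},L_{-v})\ud v$. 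Multiplying through by $\de^{(r+\nu)t}$ and rearranging yields exactly the claimed lower bound for $V(s,i)$. This part is completely clean, the only inputs being the forward DPP and the trajectory correspondence.

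For the optimal-point identity, taking the supremum of the inequality over $L\in\rL^-(s,i)$ already gives one direction. For the reverse, I would observe that the functional on the right-hand side depends on $L$ only through its restriction to $[-t,0]$; it therefore suffices to produce one admissible backward control whose restriction to $[-t,0]$ equals the time-reversed optimal control. Writing $(s,i)=\mathbf{X}_{t^\ast}^{(L^\ast;s_0,i_0)}$ with $L^\ast$ optimal, I set $\bar L_r\coloneqq L^\ast_{t^\ast+r}$, so that by uniqueness $\mathbf{Y}_{-u}^{(\bar L;s,i)}=\mathbf{X}_{t^\ast-u}^{(L^\ast;s_0,i_0)}$ for $u\in[0,t^\ast]$. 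Since $L^\ast$ is optimal, Proposition~\ref{prop:Vproperties} tells us that $\tau\mapsto\int_0^\tau\de^{-(r+\nu)u}f(\mathbf{X}_u^{(L^\ast;s_0,i_0)},L^\ast_u)\ud u+\de^{-(r+\nu)\tau}V(\mathbf{X}_\tau^{(L^\ast;s_0,i_0)})$ is constant; equating its values at $\tau=t^\ast-t$ and $\tau=t^\ast$, and performing the same change of variables as above, gives precisely
\[
V(s,i)=\de^{(r+\nu)t}V\!\left(\mathbf{Y}_{-t}^{(\bar L;s,i)}\right)-\int_0^t f\!\left(\mathbf{Y}_{-u}^{(\bar L;s,i)},\bar L_{-u}\right)\de^{(r+\nu)u}\ud u,
\]
that is, the supremum is attained at $\bar L$.

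The main obstacle is the admissibility requirement $\bar L\in\rL^-(s,i)$, i.e.\ extending $\bar L$ from $[-t^\ast,0]$ to all of $(-\infty,0]$ so that the whole backward trajectory remains in $\rT$. On $[-t,0]$ (indeed on $[-t^\ast,0]$) this is automatic, since there the trajectory coincides with the optimal forward one and hence lies in $\rT$; moreover, when $i_0>0$ the optimal path moves strictly into the interior, so the reached point satisfies $s'+i'<1$ (the case $i_0=0$ being trivial by Remark~\ref{zeroinit}). The delicate point is that, as in the computation of Remark~\ref{rem:Lmenovuoto}, the quantity $S+I$ increases as time decreases, with a control-independent rate $(\gamma+\phi(I))I$, so one must argue that a continuation exists for which $S+I$ stays below $1$ for all negative times. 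I would handle this using the invariance of the faces $\{s=0\}$ and $\{i=0\}$ of $\rT$ together with the monotonicity of $S+I$, choosing the continuation so as to steer $I$ towards $0$ and keep the total mass admissible; this is the step that genuinely uses the geometry of the state constraint, and it mirrors the constrained backward construction of~\citep[Proposition~4.7]{frenigozzipignotti2008:optstrat}.
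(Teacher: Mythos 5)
Your strategy coincides with the one the paper itself points to (it omits the proof, referring to \citep[Proposition~4.7]{frenigozzipignotti2008:optstrat} and \citep[Proposition~III.2.25]{bardi1997:optctrl}): the time-reversal correspondence between $\mathbf{Y}^{L,(s,i)}$ and the forward flow, the forward DPP of Proposition~\ref{prop:dpp} for the inequality, and the reversed optimal control together with the constancy property of Proposition~\ref{prop:Vproperties} for the equality at points of $\rO$. The first part of your argument is complete and correct, and the change-of-variables bookkeeping in both parts checks out.

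The genuine gap is in your proposed construction of the admissible backward continuation. You suggest ``choosing the continuation so as to steer $I$ towards $0$'', but this mechanism is not available in general: along backward trajectories one has $\frac{\ud}{\ud u}I_{-u}=\bigl(\gamma+\phi(I_{-u})-\beta(1-\theta L_{-u})^2S_{-u}\bigr)I_{-u}$, and the control enters only through the factor $(1-\theta L)^2\in[(1-\theta\bar L)^2,1]$, so the planner can only \emph{weaken} the stabilizing transmission term; backward decay of $I$ is possible only where $\beta S>\gamma+\phi(I)$ holds already with $L=0$. Wherever $\beta S\leq\gamma+\phi(I)$, \emph{every} admissible control makes $I$ grow backward, while $S+I$ grows at the control-independent rate $(\gamma+\phi(I))I$, so no control choice ``steers $I$ to zero'' there, and whether the trajectory exits $\rT$ is dictated by the data rather than by the control. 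Consequently the last step must be argued differently: since the functional depends on $L$ only through its restriction to $[-t,0]$, what you actually need is that the on-path point $\bfX_{t^\ast-t}^{L^\ast,s_0,i_0}$ admits \emph{some} backward-forever admissible control (equivalently, that $\rL^-$ is nonempty there); note that you cannot simply keep retracing past $-t^\ast$, since Remark~\ref{rem:Lmenovuoto} shows $\rL^-(s_0,i_0)=\emptyset$ whenever $s_0+i_0=1$ with $i_0\neq 0$, so the continuation beyond the optimal initial datum is exactly where the construction can break. This admissibility verification, which genuinely uses the geometry of the state constraint, is the substantive content of the constrained backward construction in \citep[Proposition~4.7]{frenigozzipignotti2008:optstrat} that the paper invokes, and your sketch asserts it through a mechanism that fails on part of $\rT$.
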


\begin{remark}
Since in the second part of Proposition~\ref{prop:BDP} it is assumed that $(s,i)\in\rO$, the supremum in the last equality is attained, and hence we can replace it with a maximum.
\end{remark}

\section{{\color{blue}The Hamilton-Jacobi-Bellman equation}}\label{sec:HJB}

In this section we study the HJB equation for optimization problem~\eqref{eq:optpb}, which is given by


\begin{multline}
  \label{eq:HJB_old}
  (r+\nu)v(s,i)=  \min_{l\in[0,\bar L]}\left[(s+i)lw+\beta(1-\theta l)^2si\left(\partial_iv(s,i)-\partial_sv(s,i)\right)\right]\\+i\phi(i)\left(\frac{w}{r}+\chi\right)-(\gamma+\phi(i))i\partial_iv(s,i), \, \quad (s,i) \in \rT.
 \end{multline}

{\color{blue}
In Section~\ref{sec:Vvisc} we characterize the value function as the unique viscosity solution, in a sense to be made precise later, of the HJB equation~\eqref{eq:HJB_old}. Then, in Section~\ref{sec:opt} we give some optimality conditions, that allow us to characterize optimal policies.

We give a preliminary result that allows to write~\eqref{eq:HJB_old} in a more explicit form.} 
Let us define, for all $(s,i,p,q,l) \in \rT \times \R^2 \times [0,\bar L]$ the \emph{current value Hamiltonian}
\begin{align}\label{eq:HCV}
H_{\mathrm{CV}}(s,i,p,q,l)
&\coloneqq (s+i)lw+\beta(1-\theta l)^2si\left(q-p\right) +i\phi(i)\left(\frac{w}{r}+\chi\right)-(\gamma+\phi(i))iq \notag \\
&= \beta \theta^2 si (q-p) l^2 + \left[(s+i)w - 2\beta \theta si (q-p)\right] l \notag
\\
&\qquad + \beta si (q-p) + i\phi(i)\left(\frac{w}{r}+\chi\right)-(\gamma+\phi(i))iq\, ,
\end{align}
and the \emph{Hamiltonian}
\begin{equation}\label{eq:H}
  H(s,i,p,q) \coloneqq \min_{l\in[0,\bar L]} H_{\mathrm{CV}}(s,i,p,q,l), \quad (s,i,p,q) \in \rT \times \R^2,
\end{equation}
so that~\eqref{eq:HJB_old} can be written as
\begin{equation}\label{eq:HJB}
  (r+\nu)v(s,i)=H\left(s,i,\partial_s v(s,i),\partial_i v(s,i)\right)\ , \quad (s,i) \in \rT.
\end{equation}
We also define the set of minimizers for~\eqref{eq:H}, i.e.,
\begin{equation}\label{eq:Hminimizers}
\Psi(s,i,p,q) \coloneqq \{l \in [0,\bar L] \colon H_{\mathrm{CV}}(s,i,p,q,l)=H(s,i,p,q)\}, \quad (s,i,p,q) \in \rT \times \R^2.
\end{equation}
The following proposition shows that $H$ and $\Psi$ can be explicitly computed.
\begin{proposition}\label{prop:Hexplicit}
Let us define the following sets, which form a partition of $\rT \times \R^2$,
{\color{blue}
\begin{align*}
C_I &\coloneqq \left\{(s,i,p,q) \in \rT \times \R^2 \colon i = 0\right\},
\\
C_S &\coloneqq \left\{(s,i,p,q) \in \rT \times \R^2 \colon s = 0, \, i \neq 0\right\},
\\
A_0 &\coloneqq \left\{(s,i,p,q) \in \rT \times \R^2 \colon s \neq 0, \, i \neq 0, \, p = q\right\},
\\
A_1 &\coloneqq \left\{(s,i,p,q) \in \rT \times \R^2 \colon s \neq 0, \, i \neq 0, \, q < p\right\},
\\
A_2 &\coloneqq \left\{(s,i,p,q) \in \rT \times \R^2 \colon s \neq 0, \, i \neq 0, \, q > p, \, \dfrac{\beta si}{s+i} \leq \dfrac{w}{2\theta(q-p)}\right\},
\\
A_3 &\coloneqq \left\{(s,i,p,q) \in \rT \times \R^2 \colon s \neq 0, \, i \neq 0, \, q > p, \right.
\\
&\qquad \qquad \qquad \qquad \qquad \qquad \left. \dfrac{w}{2\theta(q-p)} < \dfrac{\beta si}{s+i} < \dfrac{w}{2\theta(1-\theta \bar L)(q-p)}\right\},
\\
A_4 &\coloneqq \left\{(s,i,p,q) \in \rT \times \R^2 \colon s \neq 0, \, i \neq 0, \, q > p, \, \dfrac{\beta si}{s+i} \geq \dfrac{w}{2\theta(1-\theta \bar L)(q-p)}\right\}.
\end{align*}
}
For any $(s,i) \in \rT$, $(p,q) \in \R^2$, the Hamiltonian $H$ defined in~\eqref{eq:H} is given by
{\color{blue}
\begin{equation*}
H(s,i,p,q) =
\begin{dcases}
0,	&\text{if } (s,i,p,q) \in C_I, \\
i\phi(i)\left[\dfrac wr + \chi\right] - iq \left(\gamma + \phi(i)\right),	&\text{if } (s,i,p,q) \in C_S \cup A_0, \\
\beta si(q-p) + i\phi(i)\left[\dfrac wr + \chi\right] - iq \left(\gamma + \phi(i)\right), &\text{if } (s,i,p,q) \in A_1 \cup A_2, \\
\frac{w^2}{4\theta^2(p-q)}\frac{(s+i)^2}{\beta si} + \dfrac w\theta(s+i) \\
\qquad  + i\phi(i)\left[\dfrac wr + \chi\right] - iq \left(\gamma + \phi(i)\right), &\text{if } (s,i,p,q) \in A_3, \\
\beta si \left(\theta \bar L -1\right)^2 (q-p) + w \bar L (s+i)  \\
\qquad + i\phi(i)\left[\dfrac wr + \chi\right]- iq \left(\gamma + \phi(i)\right), & \text{if } (s,i,p,q) \in A_4\, .
\end{dcases}
\end{equation*}
}
and the set of minimizers $\Psi$ is given by
{\color{blue}
\begin{equation*}
\Psi(s,i,p,q) =
\begin{dcases}
[0,\bar L], &\text{if } (s,i) = (0,0) \\
\{0\},	&\text{if } (s,i,p,q) \in C_I \cup C_S \cup A_0 \cup A_1 \cup A_2, \\
&\text{ with } (s,i) \neq (0,0), \\
\left\{\frac{1}{\theta}-\frac{w}{2\theta^2(q-p)}\frac{s+i}{\beta si}\right\}, &\text{if } (s,i,p,q) \in A_3, \\
\{\bar L\}, & \text{if } (s,i,p,q) \in A_4.
\end{dcases}
\end{equation*}
}%
Moreover, the Hamiltonian $H$ is continuous on $\rT \times \R^2$ and, for each fixed $(s,i) \in \rT$, the function $\bR^2\ni(p,q)\mapsto H(s,i;p,q)$ is concave.
\end{proposition}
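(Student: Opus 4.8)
The plan is to read~\eqref{eq:H} as an elementary minimization of a quadratic over a compact interval. From the second expression for $H_{\mathrm{CV}}$ in~\eqref{eq:HCV}, for each fixed $(s,i,p,q)$ the map $l \mapsto H_{\mathrm{CV}}(s,i,p,q,l)$ is the quadratic $\mathfrak{a}\,l^2 + \mathfrak{b}\,l + \mathfrak{c}$, where $\mathfrak{a} = \beta\theta^2 si(q-p)$, $\mathfrak{b} = (s+i)w - 2\beta\theta si(q-p)$ and $\mathfrak{c} = \beta si(q-p) + i\phi(i)\bigl(\tfrac{w}{r}+\chi\bigr) - (\gamma+\phi(i))iq$. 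So first I would minimize this quadratic over $[0,\bar L]$ by the standard case split on the sign of the leading coefficient $\mathfrak{a}$ and on the location of the unconstrained vertex $l^\ast = -\mathfrak{b}/(2\mathfrak{a})$; the regions $C_I,C_S,A_0,\dots,A_4$ are precisely the outcomes of that split.

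Concretely: whenever $si = 0$ or $p = q$ --- i.e.\ on $C_I$, $C_S$, $A_0$ --- one has $\mathfrak{a}=0$ and the map is affine with non-negative slope $\mathfrak{b} = (s+i)w \ge 0$, so the minimum is at $l = 0$, with $\Psi = \{0\}$ unless $(s,i)=(0,0)$ where the map is constant and $\Psi = [0,\bar L]$; this gives the first two lines of the formula for $H$. On $A_1$, where $q<p$, we have $\mathfrak{a}<0$, so the parabola is concave and its minimum over $[0,\bar L]$ sits at an endpoint; I would compute $H_{\mathrm{CV}}(s,i,p,q,\bar L) - H_{\mathrm{CV}}(s,i,p,q,0) = \bar L\bigl[(s+i)w + \beta\theta si(q-p)(\theta\bar L - 2)\bigr]$ and note that, since $q-p<0$ and $\theta\bar L - 2 < 0$ (using $\theta<1$, $\bar L\le 1$), this is strictly positive, forcing the minimum at $l=0$ and hence the same value as on $A_2$. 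On the remaining regions $q > p$, so $\mathfrak{a} > 0$ and the parabola is convex with vertex $l^\ast = \tfrac1\theta - \tfrac{w(s+i)}{2\beta\theta^2 si(q-p)}$. A direct check shows $l^\ast \le 0$ is equivalent to the defining inequality of $A_2$, $l^\ast \ge \bar L$ to that of $A_4$, and the intermediate band to $A_3$. Projecting $l^\ast$ onto $[0,\bar L]$ yields $\Psi$, and substituting gives the closed forms: the endpoint value $\mathfrak{a}\bar L^2 + \mathfrak{b}\bar L + \mathfrak{c} = \beta si(\theta\bar L-1)^2(q-p) + w\bar L(s+i) + \dots$ on $A_4$, and the vertex value $\mathfrak{c} - \mathfrak{b}^2/(4\mathfrak{a})$ on $A_3$, which simplifies to the stated expression.

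For the two qualitative claims I would bypass the piecewise formula entirely. Continuity of $H$ follows because $H_{\mathrm{CV}}$ is jointly continuous in $(s,i,p,q,l)$ and the minimization in~\eqref{eq:H} is over the fixed compact set $[0,\bar L]$; the minimum of a jointly continuous function over a compact parameter set depends continuously on the remaining variables. Concavity in $(p,q)$ is equally structural: from the first line of~\eqref{eq:HCV}, for each fixed $(s,i,l)$ the map $(p,q) \mapsto H_{\mathrm{CV}}(s,i,p,q,l)$ is affine, since the $(p,q)$-dependence enters only through $\beta(1-\theta l)^2 si(q-p)$ and $-(\gamma+\phi(i))iq$, and a pointwise infimum of affine functions is concave; hence so is $H(s,i,\cdot,\cdot)$.

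The routine but error-prone part is the algebraic bookkeeping: matching the projection thresholds $l^\ast \le 0$ and $l^\ast \ge \bar L$ to the exact inequalities cutting out $A_2$ and $A_4$, and simplifying $\mathfrak{c} - \mathfrak{b}^2/(4\mathfrak{a})$ into the $A_3$ expression. No genuine obstacle arises, since continuity and concavity come for free from the min-of-affine and compactness structure; the only point needing a short inequality argument is ruling out the endpoint $l = \bar L$ on $A_1$, which is handled by the sign computation above.
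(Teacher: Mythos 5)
Your proposal is correct and follows the same core strategy as the paper's proof: reduce the minimization in~\eqref{eq:H} to that of the quadratic $l \mapsto \mathfrak{a}l^2 + \mathfrak{b}l$ over $[0,\bar L]$ (the constant $\mathfrak{c}$ being irrelevant for the minimizers), split cases on the sign of the leading coefficient and the location of the unconstrained vertex, and check that the thresholds $l^\ast \leq 0$ and $l^\ast \geq \bar L$ reproduce exactly the inequalities defining $A_2$ and $A_4$; your vertex $l^\ast$ coincides with the paper's $x^\star$, and your handling of $C_I$, $C_S$, $A_0$ and of the degenerate point $(s,i)=(0,0)$ matches the paper's. You deviate in two local steps, both soundly. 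On $A_1$ you compare the two endpoint values directly, via $H_{\mathrm{CV}}(s,i,p,q,\bar L)-H_{\mathrm{CV}}(s,i,p,q,0)=\bar L\left[(s+i)w+\beta\theta si(q-p)(\theta\bar L-2)\right]>0$, whereas the paper shows $x^\star>\bar L$ so that the concave parabola is increasing on $[0,\bar L]$; these are equivalent in content and effort. For the concavity of $(p,q)\mapsto H(s,i,p,q)$ your argument is genuinely different and in fact preferable: you observe that $(p,q)\mapsto H_{\mathrm{CV}}(s,i,p,q,l)$ is affine for each fixed $l$, so $H$ is a pointwise infimum of affine functions and hence concave, while the paper only verifies concavity of the explicit expression on $A_3$ and linearity of the other pieces --- an argument that, taken literally, is incomplete, since region-by-region concavity on a partition does not by itself imply concavity of the glued function. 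Your compactness argument for continuity is the same as the paper's.
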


\begin{proof}
We note, first, that continuity of $H$ is a straightforward consequence of the fact that the current value Hamiltonian $H_\mathrm{CV}$ is continuous on $\rT \times \R^2 \times [0,\bar L]$ and that $[0, \bar L]$ is a compact subset of $\R$.

We also note that for each fixed $(s,i,p,q) \in \rT \times \R^2$ the set of minimizers $\Psi(s,i,p,q)$ coincides with the set of minimizers of the quadratic expression
\begin{equation*}
H_0(l; s,i,p,q) \coloneqq \beta \theta^2 si (q-p) l^2 + \left[(s+i)w - 2\beta \theta si (q-p)\right] l, \quad l \in [0,\bar L].
\end{equation*}
We divide our proof according to the different possible cases.

{\color{blue}
Clearly, $H_0(l; 0,0,p,q) = 0$, for all $l \in [0, \bar L]$ and any $(p,q) \in \R^2$, and hence $\Psi(0,0,p,q) = [0,\bar L]$. If, instead, $(s,i) \neq (0,0)$, then
\begin{equation*}
H_0(l; s,i,p,q) =
\begin{dcases}
swl, &\text{if } (s,i,p,q) \in C_I, \\
iwl, &\text{if } (s,i,p,q) \in C_S, \\
(s+i)wl, &\text{if } (s,i,p,q) \in A_0.
\end{dcases}
\end{equation*}
Therefore, for each fixed $(s,i,p,q) \in C_I \cup C_S \cup A_0$, with $(s,i) \neq (0,0)$, the minimum of $l \mapsto H_0(l; s,i,p,q)$, $l \in [0, \bar L]$, is attained at $l = 0$. Thus,
\begin{equation*}
\Psi(s,i,p,q) = \{0\}, \quad (s,i,p,q) \in C_I \cup C_S \cup A_0, \text{ with } (s,i) \neq (0,0),
\end{equation*}
and, still considering $(s,i) \neq (0,0)$,
\begin{equation*}
H(s,i,p,q) = H_{\mathrm{CV}}(s,i,p,q,0) =
\begin{dcases}
0, &\text{if } (s,i,p,q) \in C_I, \\
i\phi(i)\left[\dfrac wr + \chi\right] - iq \left(\gamma + \phi(i)\right), &\text{if } (s,i,p,q) \in C_S \cup A_0.
\end{dcases}
\end{equation*}

Next, we study the case where $s \neq 0$, $i \neq 0$, and $q \neq p$.
}%
We observe that, for each fixed {\color{blue}$(s,i,p,q) \in A_1 \cup A_2 \cup A_3 \cup A_4$}, the abscissa of the vertex of the parabola $x \mapsto H_0(x; s,i,p,q)$ is
{\color{blue}
\begin{equation*}
x^\star \coloneqq \frac{1}{\theta}-\frac{w}{2\theta^2(q-p)}\frac{s+i}{\beta si}\ .
\end{equation*}
}
If $q < p$, then the parabola $x \mapsto H_0(x; s,i,p,q)$ is concave and
{\color{blue} $x^\star > \bar L$. Indeed,
\begin{equation*}
x^\star > \bar L
\quad \Longleftrightarrow \quad
\dfrac{\beta si}{s+i} > \dfrac{w}{2\theta(1-\theta \bar L)(q-p)}.
\end{equation*}
Since $w > 0$, $0 < \theta < 1$, $0 < \bar L \leq 1$, and $q-p < 0$, the right-hand-side is negative, and hence the latter inequality is verified.
Therefore, for each fixed $(s,i,p,q) \in A_1$, the minimum of $l \mapsto H_0(l; s,i,p,q)$, $l \in [0, \bar L]$, is attained at $l = 0$. Thus,
\begin{equation*}
\Psi(s,i,p,q) = \{0\}, \quad (s,i,p,q) \in A_1,
\end{equation*}
and
\begin{equation*}
H(s,i,p,q) = H_{\mathrm{CV}}(s,i,p,q,0) = \beta si(q-p) + i\phi(i)\left[\dfrac wr + \chi\right] - iq \left(\gamma + \phi(i)\right), \, (s,i,p,q) \in A_1.
\end{equation*}
}

If $q > p$, then then the parabola $x \mapsto H_0(x; s,i,p,q)$ is convex and we have three possible cases.
If $x^\star \leq 0$, i.e., if $\frac{\beta si}{s+i} \leq \frac{w}{2\theta(q-p)}$, then the minimum of $l \mapsto H_0(l; s,i,p,q)$, $l \in [0, \bar L]$, is attained at $l = 0$.
{\color{blue} Thus,
\begin{equation*}
\Psi(s,i,p,q) = \{0\}, \quad (s,i,p,q) \in A_2,
\end{equation*}
and
}
\begin{equation*}
H(s,i,p,q) = H_{\mathrm{CV}}(s,i,p,q,0) = \beta si(q-p) + i\phi(i)\left[\dfrac wr + \chi\right] - iq \left(\gamma + \phi(i)\right), \, (s,i,p,q) \in A_2.
\end{equation*}
If, instead, $0 < x^\star < \bar L$, that is, $\frac{w}{2\theta(q-p)} < \frac{\beta si}{s+i} < \frac{w}{2\theta(1-\theta \bar L)(q-p)}$, then the minimum of $l \mapsto H_0(l; s,i,p,q)$, $l \in [0, \bar L]$, is attained at $l = x^\star$.
{\color{blue} Thus,
\begin{equation*}
\Psi(s,i,p,q) = \left\{ \frac{1}{\theta}-\frac{w}{2\beta\theta^2}\frac{s+i}{si}\frac{1}{q-p}\right\}, \quad (s,i,p,q) \in A_3,
\end{equation*}
and
}
\begin{multline*}
H(s,i,p,q) = H_{\mathrm{CV}}(s,i,p,q,x^\star) = \frac{w^2}{4\beta\theta^2}\frac{(s+i)^2}{si}\frac{1}{p-q} - iq \left(\gamma + \phi(i)\right) \\
\qquad + \dfrac w\theta(s+i) +  i\phi(i)\left[\dfrac wr + \chi\right], \quad (s,i,p,q) \in A_3.
\end{multline*}
Finally, if $x^\star \geq \bar L$, that is, $\frac{\beta si}{s+i} \geq \frac{w}{2\theta(1-\theta \bar L)(q-p)}$, then the minimum of $l \mapsto H_0(l; s,i,p,q)$, $l \in [0, \bar L]$, is attained at $l = \bar L$.
{\color{blue} Thus,
\begin{equation*}
\Psi(s,i,p,q) = \{\bar L\}, \quad (s,i,p,q) \in A_4,
\end{equation*}
and
}
\begin{multline*}
H(s,i,p,q) = H_{\mathrm{CV}}(s,i,p,q,\bar L) = \beta\left(\theta \bar L -1\right)^2 si (q-p) - iq \left(\gamma + \phi(i)\right) \\
\qquad + w \bar L (s+i) +  i\phi(i)\left[\dfrac wr + \chi\right], \quad (s,i,p,q) \in A_4.
\end{multline*}
Putting together all these facts we get the explicit expressions for $H$ and $\Psi$.

It is not difficult to show that the expression of $H$ on the set $A_3$ is, for each fixed $(s,i) \in \rT$, a concave function in $(p,q)$. Therefore, $(p,q) \mapsto H(s,i,p,q)$ is concave.
\end{proof}

\begin{remark}
Note that the function $(p,q) \mapsto H(s,i,p,q)$, for fixed $(s,i) \in \rT$, is not strictly concave, as all expressions (except for the third one) given above for $H$ are clearly linear in $(p,q)$.
\end{remark}

\subsection{The value function is the unique viscosity solution of the HJB equation}\label{sec:Vvisc}

We now show that the value function $V$ is the unique solution to the HJB equation~\eqref{eq:HJB} in the sense of viscosity solutions, introduced by \citet{CrandallLions1983:viscsol}. In what follows, if $g \colon \R^n \to \R$ is a continuously differentiable function, $\dD g(x)$ denotes the gradient of $g$ at $x \in \R^n$, i.e., the vector of partial derivatives
\begin{equation*}
\dD g(x) \coloneqq \left(\partial_{x_1} g(x), \dots, \partial_{x_n} g(x)\right), \quad x \in \R^n.
\end{equation*}
We denote by $\mathrm{int}\,\rT$ the interior of $\rT$. We need the following definitions. 
\begin{definition}\label{def:viscsol}
A continuous function $v \colon \rT \to \R$ is called a \emph {constrained viscosity solution} of~\eqref{eq:HJB}
on $\rT$ if it is both
\begin{itemize}
	\item a \emph{viscosity subsolution} of~\eqref{eq:HJB}
 on $\mathrm{int} \, \rT$, i.e., if
	\begin{equation*}
		(r+\nu)v(s,i)-H\left(s,i,\partial_s \psi(s,i),\partial_i \psi(s,i)\right) \leq 0
	\end{equation*}
	whenever $\psi \in \dC^1(\rT)$ and $(s,i) \in \mathrm{int}\, \rT$ is a global maximum point of $(v - \psi)$;
	\item a \emph{viscosity supersolution} of~\eqref{eq:HJB}
 on $\rT$, i.e., if
	\begin{equation*}
		(r+\nu)v(s,i)-H\left(s,i,\partial_s \psi(s,i),\partial_i \psi(s,i)\right) \geq 0
	\end{equation*}
	whenever $\psi \in \dC^1(\rT)$ and $(s,i) \in \rT$ is a global minimum point of $(v - \psi)$.
\end{itemize}
\end{definition}

If $u$ is continuous on an open set $\Omega\subset\bR^n$ and $x\in \Omega$ we define the \emph{superdifferential} of $u$ at $x$ as the set
\begin{equation*}
  \dD^+u(x)=\left\{p\in\bR^n\colon\limsup_{y\to x, y\in \Omega}\frac{u(y)-u(x)-p\cdot(y-x)}{\vert y-x\vert}\leq 0\right\}
\end{equation*}
and the \emph{subdifferential} of $u$ at $x$ as the set
\begin{equation*}
  \dD^-u(x)=\left\{p\in\bR^n\colon\liminf_{y\to x, y\in \Omega}\frac{u(y)-u(x)-p\cdot(y-x)}{\vert y-x\vert}\geq 0\right\}\ .
\end{equation*}

It follows that a continuous function $v\colon\Omega\to\bR$ is a viscosity subsolution of~\eqref{eq:HJB} on $\Omega$ if and only if $	(r+\nu)v(s,i)-H\left(s,i,p,q\right)\leq 0$ for every $(s,i)\in\Omega$ and every $(p,q)\in \dD^+v(s,i)$, and it is a viscosity supersolution of~\eqref{eq:HJB} on $\Omega$ if and only if $(r+\nu)v(s,i)-H\left(s,i,p,q\right)\geq 0$ for every $(s,i)\in\Omega$ and every $(p,q)\in \dD^-v(s,i)$.

We establish, first, the following uniqueness result.
\begin{theorem}\label{th:Vconstrviscsol}
The value function $V$ is the unique constrained viscosity solution to the HJB equation~\eqref{eq:HJB}.
\end{theorem}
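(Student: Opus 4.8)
The statement splits naturally into existence ($V$ is a constrained viscosity solution of~\eqref{eq:HJB}) and uniqueness. For existence I would read both inequalities off the Dynamic Programming Principle of Proposition~\ref{prop:dpp}, following the classical scheme of~\citep[Chapter~III, Section~2]{bardi1997:optctrl}. At an interior maximum $(s,i)\in\mathrm{int}\,\rT$ of $V-\psi$, with $\psi\in\dC^1(\rT)$, inserting the constant control $L\equiv l$ into the DPP and using $V(S_t,I_t)-V(s,i)\le\psi(S_t,I_t)-\psi(s,i)$, dividing by $t$ and letting $t\to0^+$, yields $(r+\nu)V(s,i)\le f(s,i,l)+\bfb(s,i,l)\cdot(\partial_s\psi(s,i),\partial_i\psi(s,i))=H_{\mathrm{CV}}(s,i,\partial_s\psi,\partial_i\psi,l)$; minimizing over $l$ gives the subsolution inequality $(r+\nu)V(s,i)\le H$. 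The symmetric computation at a minimum of $V-\psi$ with an $\epsilon$-optimal control gives the supersolution inequality $(r+\nu)V(s,i)\ge H$, and since the flow of~\eqref{eq:SIRDreduced} leaves $\rT$ invariant, trajectories never exit $\rT$ and this inequality holds up to $\partial\rT$, as required by Definition~\ref{def:viscsol}.

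For uniqueness it suffices to prove a comparison principle: any continuous $u$ that is a subsolution on $\mathrm{int}\,\rT$ and any continuous $w$ that is a supersolution on $\rT$ satisfy $u\le w$; applying this to two constrained solutions in both orders forces them to coincide. Before the main argument I would record two structural facts. First, from the explicit formula of Proposition~\ref{prop:Hexplicit} together with the Lipschitz bounds~\eqref{eq:blip} and~\eqref{eq:flip}, writing $H=\min_l H_{\mathrm{CV}}$ and $H_{\mathrm{CV}}(s,i,p,q,l)=f(s,i,l)+\bfb(s,i,l)\cdot(p,q)$, one obtains the modulus estimate $\abs{H(s,i,p,q)-H(s',i',p,q)}\le\bigl(M_f+M_b\norm{(p,q)}\bigr)\norm{(s,i)-(s',i')}$, which is exactly the structural hypothesis that comparison arguments require; moreover $(p,q)\mapsto H(s,i,p,q)$ is Lipschitz (indeed concave). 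Second, the factor $r+\nu>0$ multiplying $v$ provides the strict monotonicity in the zeroth-order term. These place~\eqref{eq:HJB} within the scope of the comparison theory of~\citep[Chapter~IV]{bardi1997:optctrl}.

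The genuine obstacle is the boundary. Running the doubling-of-variables method, with the shorthand $x=(s,i)$, $y=(s',i')$, one maximizes $u(x)-w(y)-\tfrac{1}{2\epsilon}\norm{x-y}^2$ over $\rT\times\rT$; the supersolution property of $w$ may be invoked at its maximizing point since it holds on the whole of $\rT$, but the subsolution property of $u$ is available only on $\mathrm{int}\,\rT$, whereas its maximizing point may reach $\partial\rT$. This is precisely what the interior cone condition of~\citet{soner:optcontrol1} resolves. Here $\rT$ is convex with nonempty interior, so for the centroid $x_0=(1/3,1/3)$ the contraction $x\mapsto(1-\delta)x+\delta x_0$ maps $\rT$ into $\mathrm{int}\,\rT$ at distance of order $\delta$ from $\partial\rT$, which is the cone condition in its simplest form. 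I would therefore replace $u(x)$ in the doubled functional by its value at the shifted interior point $(1-\delta)x+\delta x_0$, legitimizing the subsolution inequality there and controlling the discrepancy through the uniform continuity of $u$, which holds since $u$ is continuous on the compact set $\rT$.

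Completing the proof, I would subtract the supersolution inequality for $w$ from the subsolution inequality for $u$, bound the difference of Hamiltonians by the modulus estimate above, and use the standard facts $\tfrac{1}{\epsilon}\norm{x_\epsilon-y_\epsilon}^2\to0$ and $\norm{x_\epsilon-y_\epsilon}\,\norm{(p_\epsilon,q_\epsilon)}\to0$ as $\epsilon\to0$ (after first sending $\delta\to0$), where $(p_\epsilon,q_\epsilon)=\tfrac{1}{\epsilon}(x_\epsilon-y_\epsilon)$ is the common gradient, so that the penalization and boundary errors vanish. The surviving term is $(r+\nu)\bigl(u(\bar x)-w(\bar x)\bigr)\le0$ at a limit point $\bar x$, whence $\max_{\rT}(u-w)\le0$. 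The delicate point throughout is the boundary step: choosing the inward shift and tracking the extra error terms it introduces while sending $\delta,\epsilon\to0$ in the right order; the remainder is the standard viscosity-solution machinery.
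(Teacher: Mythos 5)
Your proposal is correct and takes essentially the same route as the paper, whose proof simply combines Proposition~\ref{prop:V_UCb} and the Dynamic Programming Principle of Proposition~\ref{prop:dpp} and then cites \citep[Theorem~2.1]{soner:optcontrol1} --- the argument you reconstruct in full: sub/supersolution inequalities read off the DPP (the supersolution one holding up to $\partial\rT$ by invariance of $\rT$ under the flow), and comparison via Soner's interior cone condition, which the simplex $\rT$ satisfies by convexity. One caution: in the doubling argument the limits must be taken with $\epsilon\to0$ first at fixed $\delta>0$ and only then $\delta\to0$ --- your parenthetical ``after first sending $\delta\to0$'' reverses this, which would remove the inward shift before the penalization localizes the maximizer and strand it on $\partial\rT$, where the subsolution inequality is unavailable (your closing remark about ``the right order'' suggests you know this, but the text as written states the wrong one).
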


\begin{proof}
By Proposition~\ref{prop:V_UCb}, $V$ is a bounded and uniformly continuous function on $\rT$ and, by Proposition~\ref{prop:dpp} it satisfies the Dynamic Programming Principle. Therefore, reasoning as in~\citep[Theorem~2.1]{soner:optcontrol1} (see also~\citep[Theorem~4.10]{calvia:MCcontrol}), we deduce that $V$ is the unique constrained viscosity solution to~\eqref{eq:HJB}.
\end{proof}

Using the Backward Dynamic Programming Principle, we can say something more on the value function as a viscosity solution to~\eqref{eq:HJB}. We need the following definition.

\begin{definition}\label{def:bilateralviscsol}
Let $\Omega \subset \R^n$ be an open set. A continuous function $v \colon \Omega \to \R$ is called a \emph{bilateral viscosity subsolution} (resp. \emph{supersolution}) of~\eqref{eq:HJB} 
on $\Omega$, if and only if $v$ is a viscosity subsolution (resp. supersolution) on $\Omega$ of both
\begin{align}
&(r+\nu)v(s,i)-H\left(s,i,\partial_s v(s,i),\partial_i v(s,i)\right) = 0, \label{eq:bilateral} \\
-&(r+\nu)v(s,i)+H\left(s,i,\partial_s v(s,i),\partial_i v(s,i)\right) = 0, \label{eq:bilateral2}
\end{align}
that is, $(r+\nu)v(s,i)-H\left(s,i,p,q\right)=0$, for every $(s,i)\in\Omega$ and every $(p,q)\in \dD^+v(s,i)$ (resp., $(r+\nu)v(s,i)-H\left(s,i,p,q\right)=0$, for every $(s,i)\in\Omega$ and every $(p,q)\in \dD^-v(s,i)$).

Finally, we say that $v$ is a \emph{bilateral viscosity solution} of~\eqref{eq:HJB}
on $\Omega$ if it is both a bilateral subsolution and supersolution on $\Omega$.
\end{definition}

\begin{remark}
Recall that, in general, {\color{blue}a viscosity solution to either~\eqref{eq:bilateral} or~\eqref{eq:bilateral2} is not a viscosity solution to the other one.}
\end{remark}


\begin{theorem}\label{th:V_viscsol}
The value function $V$ is a bilateral viscosity supersolution to~\eqref{eq:HJB} in the interior of $\rT$. In particular, for every $(p,q)\in \dD^- V(s,i)$, with $(s,i)$ in the interior of $\rT$,
\begin{equation}\label{eq:Vbilatsupersol}
  (r+\nu)V(s,i)=H\left(s,i,p,q\right).
\end{equation}
Moreover, $V$ is a (non bilateral) viscosity subsolution to~\eqref{eq:HJB} in the interior of $\rT$. This is equivalent to say that, for every $(p,q)\in \dD^+ V(s,i)$, with $(s,i)$ in the interior of $\rT$,
\begin{equation*}
  (r+\nu)V(s,i) - H\left(s,i,p,q\right) \leq 0.
\end{equation*}
Finally, for any $(s,i)$ in the boundary of $\rT$ and any $(p,q) \in \dD^-V(s,i)$,
\begin{equation*}
	(r+\nu)V(s,i) - H\left(s,i,p,q\right) \geq 0 \, ,
\end{equation*}
and, for any $(s,i) \in C \cup \rO$, where $\rO$ is the set of optimal points given in Definition~\ref{def:optimalpoint},
\begin{equation*}
C \coloneqq \{(s,i) \in \rT \colon i = 0\} \cup \{(s,i) \in \rT \colon s = 0, \, i \neq 1\},
\end{equation*}
and any $(p,q) \in \dD^+V(s,i)$,
\begin{equation*}
-(r+\nu)V(s,i) + H\left(s,i,p,q\right) \leq 0.
\end{equation*}
\end{theorem}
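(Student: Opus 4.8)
The statement bundles four assertions, and my first move is to separate those that are already available from the genuinely new ones. The interior subsolution inequality, $(r+\nu)V-H\le 0$ on $\dD^+V$ in $\mathrm{int}\,\rT$, and the boundary supersolution inequality, $(r+\nu)V-H\ge 0$ on $\dD^-V$ at $\partial\rT$, are exactly the two halves of the constrained viscosity solution property established in Theorem~\ref{th:Vconstrviscsol}: by Definition~\ref{def:viscsol} a constrained solution is a subsolution on $\mathrm{int}\,\rT$ and a supersolution on all of $\rT$. That same supersolution property also supplies one half of the bilateral claim, namely $(r+\nu)V(s,i)\ge H(s,i,p,q)$ for $(p,q)\in\dD^-V(s,i)$ at interior points. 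Hence only two inputs remain to be proved: the reverse inequality $(r+\nu)V\le H$ on $\dD^-V$ in the interior, and the inequality $-(r+\nu)V+H\le 0$ on $\dD^+V$ at points of $C\cup\rO$.

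\textbf{Interior bilateral property.} Fix $(s,i)\in\mathrm{int}\,\rT$ and $(p,q)\in\dD^-V(s,i)$. For a fixed control value $l\in[0,\bar L]$ let $\bfY_{-u}=\bfY_{-u}^{(l;s,i)}$ be the backward trajectory of~\eqref{eq:SIRDbwd}; for $t$ small this segment stays in $\mathrm{int}\,\rT$, so Proposition~\ref{prop:BDP} applies (equivalently, one applies the forward principle of Proposition~\ref{prop:dpp} at the displaced base point $\bfY_{-t}$, which sidesteps any global backward-admissibility issue) and gives $V(s,i)\ge V(\bfY_{-t})\de^{(r+\nu)t}-\int_0^t f(\bfY_{-u},l)\de^{(r+\nu)u}\,\dd u$. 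Inserting the subdifferential lower bound $V(\bfY_{-t})\ge V(s,i)+(p,q)\cdot(\bfY_{-t}-(s,i))+o(t)$, expanding $\de^{(r+\nu)t}=1+(r+\nu)t+o(t)$ and using $\bfY_{-t}-(s,i)=-\int_0^t\bfb(\bfY_{-u},l)\,\dd u$, I divide by $t$ and let $t\to 0^+$. Since comparing~\eqref{eq:bdef}, \eqref{eq:runningcost} and~\eqref{eq:HCV} gives $f(s,i,l)+(p,q)\cdot\bfb(s,i,l)=H_{\mathrm{CV}}(s,i,p,q,l)$, this yields $(r+\nu)V(s,i)\le H_{\mathrm{CV}}(s,i,p,q,l)$; taking the infimum over $l$ produces $(r+\nu)V(s,i)\le H(s,i,p,q)$, and together with the supersolution inequality from the reduction step this is the equality~\eqref{eq:Vbilatsupersol}.

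\textbf{The set $C\cup\rO$.} On $\{i=0\}$ the dynamics are frozen (Remark~\ref{zeroinit}), so $V\equiv 0$, while $H\equiv 0$ by the case $C_I$ of Proposition~\ref{prop:Hexplicit}; thus $-(r+\nu)V+H=0$ and the inequality is immediate. On $\{s=0,\,0<i<1\}$ the dynamics are again control-independent and $l\equiv 0$ minimises $f$ pointwise, so each such point lies in $\rO$ (it is reached from some $(0,i')$ with $i<i'<1$ by the optimal constant control $l\equiv 0$), reducing this part to the $\rO$ case. For $(s,i)\in\rO$ I use the \emph{second} part of Proposition~\ref{prop:BDP}: along the optimal backward trajectory $\bfY_{-u}$, admissible for $0<t<t^\ast$, the backward principle holds with equality. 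Testing from above with $(p,q)\in\dD^+V(s,i)$, i.e. $V(\bfY_{-t})\le V(s,i)+(p,q)\cdot(\bfY_{-t}-(s,i))+o(t)$, and passing to the limit as before, I get $(r+\nu)V(s,i)\ge\liminf_{t\to 0^+}\tfrac1t\int_0^t H_{\mathrm{CV}}(\bfY_{-u},p,q,\hat L_{-u})\,\dd u\ge\lim_{t\to 0^+}\tfrac1t\int_0^t H(\bfY_{-u},p,q)\,\dd u=H(s,i,p,q)$, where the middle step uses $H_{\mathrm{CV}}\ge H$ and the last uses continuity of $H$ (Proposition~\ref{prop:Hexplicit}) with $\bfY_{-u}\to(s,i)$. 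This is the required $-(r+\nu)V+H\le 0$.

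\textbf{Main difficulty.} The delicate point throughout is the backward construction: unlike the forward principle, running the state backward need not keep it inside $\rT$, and indeed $\rL^-(s,i)$ can be empty (Remark~\ref{rem:Lmenovuoto}). For the interior bilateral claim this is circumvented by working only for short times, where the backward segment automatically remains in $\mathrm{int}\,\rT$, so Proposition~\ref{prop:BDP}---or equivalently Proposition~\ref{prop:dpp} at the displaced base point---applies unconditionally; for the $\rO$ claim it is the optimal trajectory itself that provides an admissible backward control up to time $t^\ast$. A secondary technical care is that optimal controls are only Borel measurable, so pointwise limits of $f$ and $\bfb$ along the trajectory may fail to exist; I avoid this by keeping the bound $H_{\mathrm{CV}}\ge H$ inside the time-average and invoking continuity of $H$, which makes all limits clean without appealing to Lebesgue points.
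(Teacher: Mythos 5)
Your proof is correct, and its skeleton coincides with the paper's: both first harvest the interior subsolution inequality, the boundary supersolution inequality, and half of the bilateral equality from Theorem~\ref{th:Vconstrviscsol}, and both obtain the remaining inequalities (the reverse supersolution inequality $(r+\nu)V\le H$ on $\dD^-V$ in $\mathrm{int}\,\rT$, and $-(r+\nu)V+H\le 0$ on $\dD^+V$ at points of $\rO$) from the Backward Dynamic Programming Principle. Where the paper merely cites \citep[Corollary~III.2.28]{bardi1997:optctrl} for these two steps, you spell out the standard short-time expansion along backward trajectories; your observation that $f(s,i,l)+(p,q)\cdot\bfb(s,i,l)=H_{\mathrm{CV}}(s,i,p,q,l)$ and the uniform $o(t)$ estimates are exactly what the citation hides. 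Your ``displaced base point'' device deserves explicit credit: since $\rL^-(s,i)$ requires backward admissibility on all of $(-\infty,0]$ and can well be empty even at interior points (backward, $S+I$ increases whenever $I>0$), Proposition~\ref{prop:BDP} as stated could be vacuous there; applying the forward principle of Proposition~\ref{prop:dpp} at $\bfY_{-t}$ yields the same inequality for short times without any global admissibility, repairing a gap the paper's sketch glosses over. The genuine divergence is the set $C$: the paper introduces the inward control set $\mathcal U(s,i)$, shows $\mathcal U=[0,\bar L]$ on $C$, and invokes the inward-Hamiltonian estimate of \citep[Proposition~4.10]{frenigozzipignotti2008:optstrat} to conclude $H_{\mathrm{in}}=H$ there; you instead dispatch $\{i=0\}$ trivially (the dynamics freeze, $V\equiv 0$ since $L\equiv 0$ annihilates the running cost, and $H\equiv 0$ on $C_I$ by Proposition~\ref{prop:Hexplicit}) and absorb $\{s=0,\,0<i<1\}$ into $\rO$ by noting that on $\{s=0\}$ the flow is control-independent, $L\equiv 0$ is optimal from any $(0,i')$ with $i<i'<1$, and $I_t$ decreases continuously to $0$, so $(0,i)$ is hit at a finite time, as Definition~\ref{def:optimalpoint} requires. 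Your route is more elementary and avoids the inward-Hamiltonian machinery altogether, at the cost of being tailored to this specific geometry of $C$; the paper's argument via $H_{\mathrm{in}}$ is the one that would generalize to boundary pieces that are not unions of trivial faces and optimal points.
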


\begin{proof}
We provide a sketch of the proof.
By Theorem~\ref{th:Vconstrviscsol}, $V$ is the unique constrained viscosity solution to~\ref{th:Vconstrviscsol}, and hence we deduce that $V$ is a viscosity solution of~\eqref{eq:HJB} on the interior of $\rT$. More precisely, we have that, for any $(s,i)$ in the interior of $\rT$,
\begin{align}
(r+\nu)V(s,i) &\geq H\left(s,i,p,q\right), \quad (p,q)\in \dD^- V(s,i), \label{eq:Vviscsupersol} \\
(r+\nu)V(s,i) &\leq H\left(s,i,p,q\right), \quad (p,q)\in \dD^+ V(s,i), \notag
\end{align}
and that, for any $(s,i)$ in the boundary of $\rT$ and any $(p,q) \in \dD^-V(s,i)$,
\begin{equation*}
	(r+\nu)V(s,i) - H\left(s,i,p,q\right) \geq 0 \, .
\end{equation*}
Using the fact that, by Proposition~\ref{prop:BDP}, $V$ satisfies also the Backward Dynamic Programming Principle, arguing as in~\citep[Corollary~III.2.28]{bardi1997:optctrl}, we find that $V$ is a supersolution of $-(r+\nu)V(s,i) + H\left(s,i,\partial_s V(s,i),\partial_i V(s,i)\right) = 0$ on the interior of $\rT$, i.e., for any $(s,i)$ in the interior of $\rT$,
\begin{equation}\label{eq:VsupersolmenoH}
-(r+\nu)V(s,i) + H\left(s,i,p,q\right) \geq 0, \quad (p,q)\in \dD^- V(s,i),
\end{equation}
and that, for any $(s,i) \in \rO$ and any $(p,q) \in \dD^+V(s,i)$,
\begin{equation*}
-(r+\nu)V(s,i) + H\left(s,i,p,q\right) \leq 0.
\end{equation*}
Combining~\eqref{eq:Vviscsupersol} and~\eqref{eq:VsupersolmenoH}, we get~\eqref{eq:Vbilatsupersol}.

We are, thus, left to show that the last inequality in the statement of the theorem holds for all $(s,i) \in C$ and all $(p,q) \in \dD^+V(s,i)$.
Let us define, for all $(s,i)$ in the boundary of $\rT$, the set
\begin{equation*}
\mathcal U(s,i) \coloneqq\{l \in [0, \bar L] \colon \text{ there exist } L \in \rL^-(s,i) \text{ and } \tau > 0 \text{ such that } L_t = l, \, \forall t \in (-\tau, 0]\}.
\end{equation*}
Note that, by Remark~\ref{rem:Lmenovuoto}, we have that $\mathcal U(s,i) = \emptyset$, for all $(s,i) \in \rT$ such that $s+i=1$, with $i \neq 0$. Moreover, a simple inspection of~\eqref{eq:SIRDreduced} reveals that if $i_0 = 0$, then also the backward dynamics is constant, regardless of the choice of $L$, and that if $s_0 = 0$, $i_0 \neq 1$, then for any $l \in [0,\bar L]$ one can find a time $\tau$ and a constant control equal to $l$ on $(-\tau,0]$ that keeps the backward dynamics on the segment $s =0$, $0 < i < 1$. This means that
\begin{equation}\label{eq:UonC}
\mathcal U(s,i) = [0, \bar L], \quad (s,i) \in C.
\end{equation}

Arguing as in~\citep[Proposition~4.10]{frenigozzipignotti2008:optstrat}, we deduce that $V$ verifies, for any $(s,i) \in C$ and any $(p,q) \in \dD^+V(s,i)$,
\begin{equation*}
-(r+\nu)V(s,i) + H_\mathrm{in}\left(s,i,p,q\right) \leq 0,
\end{equation*}
where $H_\mathrm{in}$ is the inward Hamiltonian
\begin{equation*}
H_\mathrm{in}\left(s,i,p,q\right) = \inf_{l \in \mathcal U(s,i)}H_{\mathrm{CV}}(s,i,p,q,l), \quad (s,i,p,q) \in C \times \R^2.
\end{equation*}
By~\eqref{eq:UonC} we deduce that $H_\mathrm{in}\left(s,i,p,q\right) = H\left(s,i,p,q\right)$ for all $(s,i,p,q) \in C \times \R^2$, and hence we obtain the last inequality stated in the theorem.
\end{proof}

\subsection{Optimality conditions}\label{sec:opt}
{\color{blue}
In this section we present some optimality conditions that allow us to interpret optimal strategies in light of the partition of $\rT \times \R^2$ introduced in Proposition~\ref{prop:Hexplicit}.
}
Let us introduce, first, some notations. We define $\dD^{\pm}V(s,i) \coloneqq \dD^+ V(s,i) \cup \dD^- V(s,i)$, $(s,i) \in \rT$ and we denote by $\partial_{(a,b)}V(s,i)$ the directional derivative of $V$ in the direction of the vector $(a,b) \in \R^2$, that is (if the limit exists),
\begin{equation*}
  \partial_{(a,b)} V(s,i)=\lim_{h\to 0}\frac{V(s+ha,i+hb)-V(s,i)}{h\left\Vert(a,b)\right\Vert}\ , \quad (s,i) \in \rT.
\end{equation*}
Clearly, if $(s,i) \in \rT$ is a point in the boundary of $\rT$, we consider only directions $(a,b)$ pointing inside $\rT$.
{\color{blue} To ease notations, whenever $\bfx_0 = (s_0,i_0)\in \rT$ and $L \in \rL$ are fixed, we denote the unique solution to~\eqref{eq:SIRDreduced} $\bfX_t^{L, \bfx_0} = (S_t^{L, s_0,i_0},I_t^{L,s_0,i_0})$, $t \geq 0$, simply by $\bfX_t = (S_t, I_t)$.}

From \citep[Lemma~III.2.50 and Remark~III.2.51]{bardi1997:optctrl} we get that the directional derivatives $\partial_{\mathbf{X}^\prime_t} V\left(\mathbf{X}_t\right)$ exist for almost every $t \geq 0$. Therefore, we get the following conditions for optimality, that can be obtained applying \citep[Theorems~III.2.49, III.2.52]{bardi1997:optctrl} and recalling that, for $r+\nu > M_b$, the value function $V$ is Lipschitz continuous on $\rT$, thanks to Proposition~\ref{prop:V_UCb}.
\begin{theorem}
 \label{thm:optimal}
 Assume that $r+\nu>M_b$ and let $\mathbf{x}_0\in \rT$.
  \begin{enumerate}[label=$(\roman{*})$]
  \item A control strategy $L\in\rL$ is optimal if and only if, for a.e. $t\ge 0$ (i.e. in the Lebesgue points of $f\left(\bfX_t,L_t\right)$),
    \begin{equation*}
      \partial_{\bfX_t^\prime} V \left(\bfX_t\right)+f\left(\bfX_t,L_t\right) = (r+\nu) V\left(\bfX_t\right)\ ,
    \end{equation*}
{\color{blue}where $f$ is the running cost function introduced in~\eqref{eq:runningcost}.}
\item\label{cond:ii} If $L\in\rL$ is an optimal control strategy, then for almost every $t>0$ and every $\mathbf{p}\in \dD^{\pm} V\left(\bfX_t\right)$ we have
 \begin{equation}
   \label{eq:cond_ii}
   \mathbf{p}\cdot \bfb\left(\bfX_t, L_t\right)+f\left(\bfX_t,L_t\right) =\min_{l\in[0,\bar L]}\left\{\mathbf{p}\cdot \bfb\left(\bfX_t, l\right)+f\left(\bfX_t, l\right)\right\}\ .
 \end{equation}
\end{enumerate}
\end{theorem}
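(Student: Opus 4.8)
We begin from a direct algebraic observation: the current value Hamiltonian factorises as $H_{\mathrm{CV}}(s,i,p,q,l) = f(s,i,l) + (p,q)\cdot\bfb(s,i,l)$, so that $H(s,i,p,q) = \min_{l\in[0,\bar L]}\{f(s,i,l)+(p,q)\cdot\bfb(s,i,l)\}$ is exactly the minimised quantity on the right-hand side of~\eqref{eq:cond_ii}. Thus part~(ii) is equivalent to the assertion that, along an optimal trajectory, $L_t$ attains the minimum defining $H(\bfX_t,\mathbf{p})$ for every $\mathbf{p}\in\dD^\pm V(\bfX_t)$, i.e.\ that $\mathbf{p}\cdot\bfb(\bfX_t,L_t)+f(\bfX_t,L_t)=(r+\nu)V(\bfX_t)=H(\bfX_t,\mathbf{p})$. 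The whole argument reduces to establishing these two equalities for a.e.\ $t$.

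For part~(i) the plan is to use Proposition~\ref{prop:Vproperties}. Since $r+\nu>M_b$, Proposition~\ref{prop:V_UCb} makes $V$ Lipschitz on $\rT$, while $\bfb$ is bounded by~\eqref{eq:bbounded}; hence $t\mapsto\bfX_t$ is Lipschitz and so is $t\mapsto V(\bfX_t)$, whose derivative $\partial_{\bfX_t^\prime}V(\bfX_t)$ exists for a.e.\ $t$ by~\citep[Lemma~III.2.50 and Remark~III.2.51]{bardi1997:optctrl}. The function $g$ of Proposition~\ref{prop:Vproperties} is then absolutely continuous, and at every Lebesgue point of $t\mapsto f(\bfX_t,L_t)$ one computes $g^\prime(t)=\de^{-(r+\nu)t}\big[\partial_{\bfX_t^\prime}V(\bfX_t)+f(\bfX_t,L_t)-(r+\nu)V(\bfX_t)\big]$. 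As $g$ is non-decreasing and is constant if and only if $L$ is optimal, optimality of $L$ is equivalent to $g^\prime=0$ a.e., which is precisely the stated identity.

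For part~(ii) the key step is a chain rule through the super/subdifferential: for a.e.\ $t$ and every $\mathbf{p}\in\dD^\pm V(\bfX_t)$ one has $\partial_{\bfX_t^\prime}V(\bfX_t)=\mathbf{p}\cdot\bfb(\bfX_t,L_t)$. This follows from the defining one-sided inequalities of $\dD^+V$ and $\dD^-V$: writing $\bfX_{t+h}=\bfX_t+h\,\bfb(\bfX_t,L_t)+o(h)$ and inserting it into those inequalities, the limits $h\to0^+$ and $h\to0^-$ squeeze $\mathbf{p}\cdot\bfb(\bfX_t,L_t)$ to the two-sided derivative $\partial_{\bfX_t^\prime}V(\bfX_t)$, which exists a.e. Combined with part~(i) this yields $\mathbf{p}\cdot\bfb(\bfX_t,L_t)+f(\bfX_t,L_t)=(r+\nu)V(\bfX_t)$.

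It remains to identify $(r+\nu)V(\bfX_t)$ with $H(\bfX_t,\mathbf{p})$. For $\mathbf{p}\in\dD^-V(\bfX_t)$ in the interior this is the bilateral relation~\eqref{eq:Vbilatsupersol} of Theorem~\ref{th:V_viscsol}. For $\mathbf{p}\in\dD^+V(\bfX_t)$ one uses that, for optimal $L$ and $t>0$, the point $\bfX_t$ lies in the set $\rO$ of optimal points (Definition~\ref{def:optimalpoint}); there the subsolution inequality $(r+\nu)V\le H$ combines with the reverse inequality $-(r+\nu)V+H\le0$ valid on $\rO$ (both furnished by Theorem~\ref{th:V_viscsol}) to give equality. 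Substituting $(r+\nu)V(\bfX_t)=H(\bfX_t,\mathbf{p})$ and the factorisation of $H$ into the identity above produces~\eqref{eq:cond_ii}. The step I expect to be most delicate is exactly this passage through the super/subdifferentials: one must verify that $\partial_{\bfX_t^\prime}V(\bfX_t)$ exists and is pinned down along the velocity direction for a.e.\ $t$ although $V$ is merely Lipschitz and $\dD^\pm V$ may be set-valued, and that the times at which $\bfX_t$ sits on the boundary of $\rT$ are either negligible or covered by the $C\cup\rO$ part of Theorem~\ref{th:V_viscsol}.
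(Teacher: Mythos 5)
Your proposal is correct and follows essentially the same route as the paper, whose proof of this theorem consists of a direct appeal to \citep[Theorems~III.2.49 and III.2.52]{bardi1997:optctrl}: your argument is precisely the unpacking of those two citations, built from the same ingredients the paper prepares for this purpose (Lipschitz continuity of $V$ from Proposition~\ref{prop:V_UCb} under $r+\nu>M_b$, the monotonicity statement of Proposition~\ref{prop:Vproperties} for part $(i)$, the directional-derivative chain rule of \citep[Lemma~III.2.50, Remark~III.2.51]{bardi1997:optctrl}, and the bilateral sub/supersolution properties of Theorem~\ref{th:V_viscsol} for part $(ii)$). The boundary subtlety you flag is real but resolves exactly as you suggest: an optimal trajectory either enters $\mathrm{int}\,\rT$ instantly (since $S_t+I_t$ is strictly decreasing and $S_t, I_t$ stay positive whenever $s_0,i_0>0$) or is confined to the set $C$, which is precisely the case covered by the last part of Theorem~\ref{th:V_viscsol}.
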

\begin{remark}
Condition~\ref{cond:ii} above is also sufficient for optimality if the set $\dD^+V(\mathbf{X}_t)$ coincides with the Clark differential of $V$ at $\mathbf{X}_t$ for almost every $t$. For instance, this is the case if we restrict to constant control strategies, because the value function is then the infimum over a compact set of smooth functions with uniform bounds. The above condition on $\dD^+V$ is also satisfied if the value function happens to be differentiable everywhere.
\end{remark}

Thanks to the explicit computations carried out in Proposition \ref{prop:Hexplicit} we can reformulate condition \ref{cond:ii} in Theorem~\ref{thm:optimal} as follows; we assume $\bfx_0 \neq (0,0)$, $L\in\rL$ optimal, $t>0$ and $\mathbf{p}=(p,q)\in \dD^{\pm} V\left(\bfX_t\right)$.
{\color{blue}
\begin{itemize}
\item If $\left(S_t,I_t,p,q\right)\in C_I \cup C_S\cup A_0 \cup A_1 \cup A_2$, then $L_t=0$.
\item If $\left(S_t,I_t,p,q\right)\in A_3$, then
\begin{equation*}
 L_t=\frac{1}{\theta}-\frac{w}{2\theta^2(q-p)}\frac{S_t+I_t}{\beta S_t I_t}\ ;
\end{equation*}
in particular, $0<L_t<\bar L$.
\item If $\left(S_t,I_t,p,q\right)\in A_4$, then $L_t=\bar L$.
\end{itemize}
Furthermore, if we assume that the value function $V$, given in~\eqref{eq:valuefunction} is differentiable everywhere, then we can interpret optimal strategies and the partition of $\rT \times \R^2$ appearing in Proposition~\ref{prop:Hexplicit} as follows. Assume that $\bfx_0 \neq (0,0)$, that $L \in \sL$ is optimal, consider $t > 0$, and define
\begin{align*}
K^{(1)}(S_t, I_t) &= \frac{w}{2\theta(\partial_i V(S_t, I_t) - \partial_s V(S_t, I_t))}, \\
K^{(2)}(S_t, I_t) &= \frac{w}{2\theta(1-\theta \bar L)(\partial_i V(S_t, I_t) - \partial_s V(S_t, I_t))}.
\end{align*}
\begin{itemize}
\item If $I_t = 0$, then there is no epidemic. In this case, $(S_t, I_t, \partial_s V(S_t, I_t), \partial_i V(S_t, I_t)) \in C_I$ and, clearly, $L_t = 0$.
\item If $S_t = 0$, then the epidemic dies out without any need for a lockdown. In this case, $(S_t, I_t, \partial_s V(S_t, I_t), \partial_i V(S_t, I_t)) \in C_S$ and $L_t = 0$. \item If $S_t, I_t \neq 0$, then the value at time $t$ of the optimal policy $L$ depends also on the derivatives of the value function and, in particular, on the sign of $\partial_i V(S_t, I_t) - \partial_s V(S_t, I_t)$. More precisely, if $\partial_i V(S_t, I_t) - \partial_s V(S_t, I_t) \leq 0$, i.e., if the marginal cost of the infected is not higher than the marginal cost of the susceptibles, then the optimal policy at time $t$ is a \emph{laissez-faire} policy. In this case, $(S_t, I_t, \partial_s V(S_t, I_t), \partial_i V(S_t, I_t)) \in A_0 \cup A_1$. If, instead, $\partial_i V(S_t, I_t) - \partial_s V(S_t, I_t) > 0$, i.e., if the marginal cost of the infected is higher than the marginal cost of the susceptibles, then:
\begin{itemize}
\item The optimal policy at time $t$ is a \emph{laissez-faire} policy whenever the ratio between the rate of newly infected people and the population that can be put in lockdown, $\frac{\beta S_t I_t}{S_t+I_t}$, is not higher than the threshold $K^{(1)}(S_t,I_t)$. In this case, 
\\
\noindent $(S_t, I_t, \partial_s V(S_t, I_t), \partial_i V(S_t, I_t)) \in A_2$;
\item A fraction of the population, smaller than $\bar L$, is put in lockdown at time $t$, whenever the ratio $\frac{\beta S_t I_t}{S_t+I_t}$ is between the two thresholds $K^{(1)}(S_t,I_t)$ and
$K^{(2)}(S_t,I_t)$. In this case, $(S_t, I_t, \partial_s V(S_t, I_t), \partial_i V(S_t, I_t)) \in A_3$;
\item The highest possible fraction of population, i.e., $\bar L$, is put in lockdown at time $t$, whenever the ratio $\frac{\beta S_t I_t}{S_t+I_t}$ is higher than the threshold $K^{(2)}(S_t,I_t)$. In this case, $(S_t, I_t, \partial_s V(S_t, I_t), \partial_i V(S_t, I_t)) \in A_4$.
\end{itemize}
\end{itemize}
}

\begin{remark}\label{rem:estensioni}
  We observe that our main results can be applied to other similar epi-econ model which display the same structure, in particular:
  \begin{itemize}
    \item the state equations are a controlled modification of the compartmental models in epidemiology, like SIR or similar;
    \item the cost functional to minimize is not strictly convex or{\color{blue}, possibly, non-convex.}
  \end{itemize}
  This is the case, for instance, of the model {\color{blue}discussed in}~\citep{acemoglu2020}.
All the results above hold {\color{blue} also in that context, with all the required adaptations}. {\color{blue}A more general setting in which the techniques showed in this paper may be applied, is the optimal} control of age-structured SIR-type models. To the best of our knowledge, the {\color{blue}study} of HJB equations in this context is still not carried out completely (see, e.g.,~\citep{FabbriGozziZanco2021}).

{\color{blue} As already happens in the case discussed in this paper, also in these similar or more general models} some open issues remain. {\color{blue}For instance, a deeper study of optimal strategies is required and this calls for} different ideas and proofs.
\end{remark}

\section{Conclusion}\label{sec:concl}
This paper makes a first step towards a complete analysis
of the dynamic programming approach for a class of epi-econ models
that have been formulated and studied in recent years.
From a technical point of view such models are difficult to study mainly due to the lack of convexity of the dynamics and of the cost.
{\color{blue}Existing numerical methods for solutions to HJB equations in the viscosity sense are not suitable (nor can be straightforwardly adapted) to simulate the value function of our optimization problem. Such simulations, in the absence of a closed-form expression for the value function, would allow us to obtain more insights about its behaviour.%
}
{\color{blue}Other important aspects that we could not analyze with the results presented here are the existence and uniqueness of an optimal strategy, possibly in feedback form, and the behavior of} optimal trajectories{\color{blue}, that is, the evolution of the epidemics under the action of an optimal control.}

Nevertheless, we think that {\color{blue}our results} provide a solid ground for further
research. {\color{blue}For instance, an important aspect to be analyzed is the behavior of} optimal trajectories.
{\color{blue}More precisely}, the next steps will be:
\begin{itemize}
\item to characterise the set where the value function is differentiable and where singularities in its gradient may arise;
\item to use the sufficient optimality conditions proved here to characterise the optimal strategies;
\item {\color{blue}to extend or adapt existing numerical schemes to the non-convex case, in order to cover at least some of the examples mentioned herein.}
\end{itemize}

\section*{Declarations}

\noindent\textbf{Competing interests.} The authors have no competing interests to declare that are relevant to the content of this article.

\medskip

\noindent\textbf{Funding.} F.~Gozzi and F.~Lippi acknowledge financial support from the ERC grant 101054421-DCS.
 Views and opinions expressed are however those of the author(s) only and do not necessarily reflect those of the European Union or the European Research Council.  

A.~Calvia, F.~Gozzi, G.~Zanco are supported by the Italian Ministry of University and Research (MIUR), in the framework of PRIN project 2017FKHBA8 001 (The Time-Space Evolution of Economic Activities: Mathematical Models and Empirical Applications).

\bibliographystyle{plainnat}
\bibliography{Bibliography}

\end{document}